\let\cedilla\c
\def\ackname{Acknowledgments}%
\def\acknowledgment{\par\addvspace{17pt}\small\rmfamily
	\trivlist\if!\ackname!\item[]\else
	\item[\hskip\labelsep
	{\bfseries\ackname}]\fi}
\newenvironment{acknowledgments}{\begin{acknowledgment}}
	{\end{acknowledgment}}
\DeclareRobustCommand{\rchi}{{\mathpalette\irchi\relax}}
\newcommand{\irchi}[2]{\raisebox{\depth}{$#1\chi$}}
\newcommand{\etal}{et~al.\@\xspace}%
\newcommand{\R}{\mathbb{R}}
\newcommand{\W}{\Omega}
\newcommand{\w}{\omega}
\newcommand{\N}{\mathcal{N}}
\renewcommand{\P}{\mathcal{P}}
\newcommand{\pathpizero}{\pathpi_0}
\newcommand{\pathpiplus}{\pathpi_+}
\newcommand{\pzero}{P_0}
\newcommand{\pplus}{P_+}
\newcommand{\rhop}[1]{\rho_P^{#1}}
\renewcommand{\c}{c}
\newcommand{\cc}{\sigma}
\newcommand{\HP}{H_P}
\newcommand{\HPstar}{H_{P^*}}
\newcommand{\F}{F}
\newcommand{\acoef}{\alpha}
\newcommand{\Acoef}{A}
\newcommand{\bconst}{\beta}
\newcommand{\phifunc}{\phi}
\newcommand{\psifunc}{\psi}
\newcommand{\deltaparam}{\eta}
\newcommand{\liftcoef}{\gamma}
\newcommand{\liftconst}{\delta}
\newcommand{\charvec}{\rchi^S}
\newcommand{\elm}{a}
\newcommand{\arc}{a'}
\newcommand{\ub}{u}
\newcommand{\pathpi}{\pi}
\newcommand{\pathfunc}{O}
\newcommand{\extfunc}{E}
\DeclareMathOperator{\conv}{conv}
\DeclareMathOperator{\proj}{proj}
\newcommand{\itab}[1]{\makebox[1.3cm]{#1\hfill}}{}
\def\mystrut(#1,#2){\vrule height #1pt depth #2pt width 0pt} 
\newsavebox\CBox
\newcommand\hcancel[2][0.5pt]{%
	\ifmmode\sbox\CBox{$#2$}\else\sbox\CBox{#2}\fi%
	\makebox[0pt][l]{\usebox\CBox}%  
	\textcolor{red}{\rule[0.5\ht\CBox-#1/2]{\wd\CBox}{#1}}}
\newtheorem{theorem}{Theorem}
\newtheorem{lemma}{Lemma}
\newtheorem{definition}{Definition}
\newtheorem{proposition}{Proposition}
\begin{document}
% Title
\title{New Solution Approaches for the Maximum-Reliability Stochastic Network Interdiction Problem}

% Short title (atop odd-numbered article pages)
%\titlerunning{New solution approaches for SNIP}

% Authors
%\author{Eli Towle \and James Luedtke}
%\institute{E. Towle \at
%	Department of Industrial and Systems Engineering \\
%	University of Wisconsin -- Madison \\
%	Madison, WI 53706 USA \\
%	\email{etowle@wisc.edu} \\
%	ORCID: 0000-0002-3859-345X
%	\and
%	J. Luedtke \at
%	Department of Industrial and Systems Engineering \\
%	University of Wisconsin -- Madison \\
%	Madison, WI 53706 USA \\
%	Tel.: +1-608-890-2560 \\
%	\email{jim.luedtke@wisc.edu} \\
%	ORCID: 0000-0001-9265-7728
%}

% Date - added by editor
%\date{Received: date / Accepted: date}

% Start arXiv:
\author{Eli Towle\thanks{etowle@wisc.edu} \and James Luedtke\thanks{jim.luedtke@wisc.edu}}
\date{\small Department of Industrial and Systems Engineering, University of Wisconsin -- Madison}

% End arXiv

% Create title
\maketitle

% Abstract
\begin{abstract}
	We investigate methods to solve the maximum-reliability stochastic network interdiction problem (SNIP). In this
	problem, a defender interdicts arcs on a directed graph to minimize an attacker's probability of undetected traversal
	through the network. The attacker's origin and destination are unknown to the defender and assumed to be random. SNIP
	can be formulated as a stochastic mixed-integer program via a deterministic equivalent formulation (DEF). As the size of this 
	DEF makes it impractical for solving large instances, current approaches to solving SNIP rely on
	modifications of Benders decomposition.	We present two new approaches to solve SNIP. First, we introduce a new
	DEF that is significantly more compact than the standard DEF. Second, we propose
	a new path-based formulation of SNIP. The number of constraints required to define this formulation grows
	exponentially with the size of the network, but the model can be solved via delayed constraint generation.  We
	present valid inequalities for this path-based formulation which are dependent on the structure of the interdicted
	arc probabilities. We propose a branch-and-cut (BC) algorithm to solve this new SNIP formulation. Computational
	results demonstrate that directly solving the more compact SNIP formulation and this BC algorithm both provide an
	improvement over a state-of-the-art implementation of Benders decomposition for this problem.
	
	\noindent \raisebox{-0.5em}{\textbf{Keywords.} Network interdiction; stochastic programming; integer programming; valid inequalities}
\end{abstract}

% Springer acknowledgments - on title page
\begin{acknowledgments}
	\begin{sloppypar}This work was supported by National Science Foundation grants CMMI-1130266 and SES-1422768.\end{sloppypar}
\end{acknowledgments}

\section{Introduction}

Network interdiction problems feature a defender and an attacker with opposing goals. The defender first modifies a network using a finite budget in an attempt to diminish the attacker's ability to perform a task. These modifications could include increasing arc traversal costs, reducing arc capacities, or removing arcs altogether. The attacker then optimizes his or her objective with respect to the newly modified network. Network interdiction problems have been studied in the context of nuclear weapons interdiction \cite{morton2007,pan2003}, disruption of nuclear weapons projects \cite{brown2009}, interdiction of drug smuggling routes \cite{wood1993}, and general critical infrastructure defense \cite{brown2009-2}.

\begin{sloppypar}In the maximum-reliability stochastic network interdiction problem (SNIP), formulated by Pan~\etal \cite{pan2003}, an attacker seeks to maximize the probability of avoiding detection while traveling through a directed network from an origin to a destination. There is a chance the attacker will be detected when traversing each arc. A defender may expend resources to place sensors on certain arcs, thereby increasing the probability of detecting an attacker on those arcs. The origin and destination of the attacker are unknown to the defender, and are assumed to be random. The defender's goal is to minimize the expected value of the attacker's maximum-reliability path through the network over all origin--destination scenarios. This can be interpreted as minimizing the overall probability of the attacker successfully attacking a critical infrastructure target or smuggling contraband to a target location undetected.\end{sloppypar}

Morton~\etal \cite{morton2007} present a deterministic equivalent formulation (DEF) of SNIP derived from the dual of the attacker's
optimization problem. Pan and Morton \cite{pan2008} improve the DEF by incorporating the values of
uninterdicted maximum-reliability paths into the model constraints. They also derive \replaced{valid inequalities}{SNIP-specific ``step inequalities''}
to strengthen the linear programming (LP) relaxation of the Benders master problem before initiating the Benders
algorithm. \deleted{These step inequalities are equivalent} \deleted{ to the mixing inequalities} \deleted{ of G{\"u}nl{\"u}k and Pochet
\cite{gunluk2001}.} Bodur~\etal \cite{bodur2016} investigate the strength of integrality-based cuts in conjunction with
Benders cuts for stochastic integer programs with continuous recourse, and use test instances of the SNIP problem to
demonstrate the value of integrality-based cuts within Benders decomposition.

We propose two new approaches to solve SNIP. First, we present a more compact DEF. This 
formulation combines constraints of the DEF for scenarios ending at the same destination, significantly 
reducing the number of constraints and variables in the DEF. Second, we present a new formulation of SNIP that includes constraints for
every origin--destination path. Although the formulation size grows exponentially with the number of arcs in the
network, the model can be solved with delayed constrained generation. A path's reliability function is a supermodular
set function representable with a strictly convex and decreasing function. We propose a branch-and-cut
algorithm that exploits this structure for each path through the network to derive valid inequalities. We consider three cases for the probabilities of
traversing interdicted arcs. Two of these cases use inequalities derived by Ahmed and Atamt{\"u}rk \cite{ahmed2011}
for the problem of maximizing a submodular utility function having similar structure.

\added{We conduct a computational study of the two proposed solution methods: directly solving the compact DEF, and
solving the path-based formulation with a branch-and-cut algorithm. We find the new methods have comparable performance,
and in particular they both outperform directly
solving the existing DEF and solving the existing DEF with a state-of-the art Benders branch-and-cut
method. Although the compact DEF is much easier to implement than the path-based branch-and-cut algorithm, it
cannot be extended to cases where the arc evasion probabilities depend on the origin/destination scenario. In contrast, the path-based decomposition can be extended to this case.} 

The maximum-reliability network interdiction problem is closely related to the shortest-path variant, in which the
network defender attempts to maximize the length of the attacker's shortest path from origin to destination.  When the
probabilities of traversing interdicted arcs in a maximum-reliability problem are all strictly positive \added{and the attacker's origin--destination pair is known}, the problem is
equivalent, via a logarithmic transformation, to a shortest-path network interdiction problem
\cite{ahuja1993,morton2010}. This transformation, however, is not valid when there exists an arc that reduces the attacker's
probability of successful traversal to zero when it is interdicted by the defender. A deterministic version of the shortest-path network interdiction problem was initially explored by
Fulkerson and Harding \cite{fulkerson1977}, and later by Golden \cite{golden1978}.

An alternative interdiction problem mostly unrelated to our work is the maximum-flow network interdiction problem,
introduced by Wollmer \cite{wollmer1964}. In this problem, a defender changes arc capacities in order to minimize the attacker's maximum flow. Deterministic and stochastic versions of
this problem have been studied by many authors, e.g., \cite{cormican1998,hemmecke2003,jeff2008,wood1993}.

In Section \ref{sec:existingresults}, we review relevant existing approaches to solving SNIP. We
propose a compact DEF of SNIP in Section \ref{sec:compact}. In Section \ref{sec:pathbased}, we propose a path-based formulation for SNIP and describe valid inequalities for the relevant mixed-integer set. We describe our computational experiments and results in Section \ref{sec:computational}.

\section{Problem statement and existing results}\label{sec:existingresults}
Let $N$ and $A$ denote the set of nodes and the set of arcs of a directed network. Let $D \subseteq A$ denote the set of arcs available for interdiction. In the first stage of SNIP, the defender may choose to install sensors on a subset of the interdictable arcs. The cost to install a sensor on arc $a \in D$ is $\c_a > 0$. The defender is constrained by installation budget $b > 0$. The probability of the attacker's undetected traversal through arc $a \in A$ without a sensor installed is $r_a \in (0,1]$. When a sensor is installed on arc $a \in D$, this probability reduces to $q_a \in [0, r_a)$. \added{The events of the attacker successfully traversing arcs are mutually independent.} \added{The attacker's origin and destination are unknown to the defender and assumed to be random.} Let $\W$ be the finite set of \added{the attacker's possible} origin--destination pairs, where $\w = (s, t) \in \W$ \added{is an origin--destination pair that} occurs with probability $p_\w > 0$ ($\sum_{\w \in \W} p_\w = 1$). We assume a path exists from $s$ to $t$ for all $(s, t) \in \W$. If not, the defender can discard that scenario from consideration. In the second stage of SNIP, the attacker's origin and destination are realized. The attacker traverses the network from the origin to the corresponding destination via the maximum-reliability path given the defender's set of interdicted arcs.

For $s,t \in N$, a simple $s$-$t$ path $P$ is a set of arcs, say $\added{\{}(i_0,i_1),(i_1,i_2),\ldots,$ $\replaced{(i_{|P|-1},i_{|P|})}{(i_{k-1},i_{k})}\added{\}}$,
where $i_0 = s$, $\replaced{i_{|P|}}{i_{k}} = t$, and the nodes $i_0,i_1,i_2,\ldots,i_{|P|} \in N$ are all distinct. Let $\P_{st}$ be the set of all simple paths from $s \in N$ to $t \in N$.  Let $S \subseteq D$ be the set of interdicted arcs, as chosen by the defender. The function
\begin{align*}
	h_P(S) \coloneqq \left( \prod_{a \in P} r_a \right) \left( \prod_{a \in P \cap S} \frac{q_a}{r_a} \right)
\end{align*}
calculates the probability of the attacker traversing the set of arcs $P$ undetected given the interdicted arcs $S$.

In the stochastic network interdiction problem (SNIP), the defender selects \replaced{arcs}{arc} to interdict to minimize the expected
value of the attacker's maximum-reliability path, subject to the budget restriction on the selected arcs. SNIP can be formulated as
	\begin{alignat}{3}
	\label{eq:path}
	\begin{aligned}
		\min_{S} && \ \smashoperator{\sum\limits_{\w = (s, t) \in \W}} p_{\w} & \max\{h_P(S) \colon P \in \P_{st}\} && \\
		\textrm{s.t.} && \sum\limits_{a \in S} \c_a & \leq b. &&
	\end{aligned}
\end{alignat}
%The objective function is the expected value of the attacker's maximum-reliability path through the network, written as the probability-weighted sum of these maximum-reliability path values. 

\subsection{Deterministic equivalent formulation}\label{subsec:extensive}
A DEF of SNIP can be obtained by using first-stage binary variables $x_{a}$ to represent whether the defender installs a sensor on arc $a \in D$. That is, $x_a = 1$ if the defender elects to install a sensor on arc $a \in D$, and $x_a = 0$ if no sensor is installed on that arc. Second-stage continuous variables $\pi_i^\w$ represent the maximum probability of the attacker traveling undetected from $i \in N$ to $t$ in scenario $\w = (s, t) \in \W$.

The formulation uses second-stage constraints to calculate the $\pi$ variables for each scenario $\w \in \W$. The $\pi$ variables are calculated using an LP formulation of the dynamic programming (DP) optimality conditions for calculating a maximum-reliability path. Using the convention $q_a \coloneqq r_a$ and $x_a \coloneqq 0$ for all $a \in A \setminus D$, each $\pi_i^\w$ is calculated as
\begin{align}\label{eq:dp}
	\pi_i^\w & = \max_{a = (i,j) \in A} \left\lbrace \pi^\w_j\left[r_a + (q_a - r_a)x_a \right] \right\rbrace, \quad i \in N,\ \w \in \W.
\end{align}
The maximum probability of the attacker reaching $t$ undetected from node $i \in N$ is the maximum over all forward adjacent nodes $j \in N$ of $\replaced{\pi_j^{\w}}{\pi_j^{t}}$, multiplied by $r_a$ if the arc is not interdicted, or $q_a$ if it is interdicted.

The optimality conditions \eqref{eq:dp} imply the set of inequalities
\begin{align}\label{eq:dpineqs}
	\pi_i^\w & \geq \pi^\w_j\left[r_a + (q_a - r_a)x_a \right], \quad a = (i,j) \in A,\ \w \in \W.
\end{align}
The inequalities \eqref{eq:dpineqs} are nonlinear. Pan and Morton \cite{pan2008} derive the following DEF of SNIP that uses a linear
reformulation of these inequalities:
\begin{subequations}
	\label{eq:ext}
	\begin{alignat}{3}
		\min_{x, \pi}\enspace\ && \quad \smashoperator{\sum\limits_{\w = (s, t) \in \W}} p_\w \pi_{s}^\w &&& \label{eq:objfun} \\
		\textrm{s.t.}\enspace\ && \smashoperator{\sum\limits_{a \in D}} \c_a x_a & \leq b && \label{eq:extcon1} \\
		&& \pi_i^\w - r_a\pi_j^\w & \geq 0, && a=(i,j) \in A \setminus D,\ \w \in \W \label{eq:pi1} \\
		&& \pi_i^\w - r_a\pi_j^\w & \geq -(r_a - q_a) \ub_j^\w x_a, \qquad && a=(i,j) \in D,\ \w \in \W \label{eq:pi2} \\
		&& \pi_i^\w - q_a\pi_j^\w & \geq 0, && a=(i,j) \in D,\ \w \in \W \label{eq:pi3} \\
		&& \pi_t^\w & = 1, && \w = (s, t) \in \W \label{eq:pi4} \\
		&& x_a & \in \{0,1\}, && a \in D. \label{eq:extcon2}
	\end{alignat}
\end{subequations}

The objective function \eqref{eq:objfun} minimizes the expected value of the attacker's maximum-reliability path.
For each scenario $\w = (s, t) \in \W$, the parameter $\ub_j^{\w}$ represents the value of the maximum-reliability path
from $j \in N$ to $t$ when no sensors are installed, and hence is an upper bound on $\pi_j^{\w}$. These parameters are calculated in a model preprocessing step.
The linear constraints \eqref{eq:pi1}--\eqref{eq:pi3} formulate the nonlinear DP inequalities \eqref{eq:dpineqs}. Constraints \eqref{eq:pi1} enforce $\pi_i^\w \geq r_a
\pi_j^\w$ for all $a = (i, j) \in A \setminus D,\ \w \in \W$. For $\w \in \W$ and $a = (i, j) \in D$, if $x_a =
0$, then \eqref{eq:pi2} becomes $\pi_i^\w \geq r_a \pi_j^\w$, which dominates \eqref{eq:pi3}. On the other hand, if $x_a
= 1$, then \eqref{eq:pi3} implies $\pi_i^\w - r_a \pi_j^\w \geq -(r_a - q_a) \ub_j^\w$, 
\eqref{eq:pi3} dominates \eqref{eq:pi2}. Thus, in either case \eqref{eq:pi1}--\eqref{eq:pi3} are equivalent to
\eqref{eq:dpineqs}. Since the variables $\pi_s^{\w}$,
$\w=(s,t) \in \W$, have positive coefficients in the objective, \deleted{this implies} the equations \eqref{eq:dp} will be
satisfied \replaced{at an extreme-point}{in an} optimal solution. 

\subsection{Benders decomposition}\label{subsec:benders}
Directly solving the DEF \eqref{eq:ext} with a mixed-integer programming solver may be too time-consuming due to 
its large size. Benders decomposition can be used to decompose large problems like SNIP. After introducing the SNIP
formulation, Pan and Morton \cite{pan2008} outline a Benders decomposition algorithm for the SNIP DEF \eqref{eq:ext}. For a fixed vector $x \in [0, 1]^{\added{|D|}}$ satisfying $\sum_{a \in D} c_a x_a \leq b$, the $\pi_j^\w$ variables in \eqref{eq:ext} can be obtained by solving
\begin{subequations}
	\label{eq:extfixed}
	\begin{alignat}{3}
		\extfunc^{st}(x) \coloneqq \min_{\pi}\enspace && \pi_{s} \qquad\quad &&& \label{eq:fixedobjfun} \\
		\textrm{s.t.}\enspace && \pi_i - r_a\pi_j & \geq 0, && a=(i,j) \in A \setminus D \label{eq:fixedpi1} \\
		&& \pi_i - r_a\pi_j & \geq -(r_a - q_a) \ub_j^\w x_a, \qquad && a=(i,j) \in D \label{eq:fixedpi2} \\
		&& \pi_i - q_a\pi_j & \geq 0, && a=(i,j) \in D \label{eq:fixedpi3} \\
		&& \pi_t & = 1 && \label{eq:fixedpi4}
	\end{alignat}
\end{subequations}
for each scenario $\w = (s, t) \in \W$. The dual of \eqref{eq:extfixed} is
\begin{alignat}{3}
	\label{eq:dualsubproblem}
	\begin{aligned}
		\max_{y, z}\ && y_t - \smashoperator{\sum\limits_{a = (i, j) \in D}} (r_a - q_a) \ub_j^\w y_{ij} x_a \qquad \qquad\ &&& \\
		\textrm{s.t.}\ && \smashoperator{\sum\limits_{(s, j) \in A}} ( y_{s j} + z_{s j} ) & = 1 && \\
		&& \smashoperator{\sum\limits_{(i, j) \in A}} (y_{ij} + z_{ij}) - \smashoperator{\sum\limits_{a=(j, i) \in A}} (r_a y_{ji} + q_a z_{ji}) & = 0,\quad && i \in N \setminus \{s, t\} \\
		&& y_{t} - \smashoperator{\sum\limits_{a=(j, t) \in A}} (r_a y_{j t} + q_a z_{j t}) & = 0 && \\
		&& y_{ij}, z_{ij} & \geq 0, && (i, j) \in A \\
		&& y_{t} & \geq 0.
	\end{aligned}
\end{alignat}
Dual variables $y_{ij}$ correspond to DEF constraints \eqref{eq:fixedpi1} and \eqref{eq:fixedpi2}, $z_{ij}$ is the dual variable for constraints \eqref{eq:fixedpi3}, and $y_{t}$ is the dual variable for constraint \eqref{eq:fixedpi4}. We fix $z_{ij} \coloneqq 0$ for all $(i, j) \in A \setminus D$, as constraint \eqref{eq:fixedpi3} only applies to arcs $D$.

The Benders master problem is as follows:
\begin{subequations}
	\label{eq:benders}
	\begin{alignat}{3}
		\min_{x, \theta}\enspace && \sum\limits_{\w \in \W} p_\w \theta^\w &&& \\
		\textrm{s.t.}\enspace && \smashoperator{\sum\limits_{a \in D}} \c_a x_a & \leq b && \\
		&& \theta^\w \geq \bar{y}_{t}^{\w} &- \smashoperator{\sum\limits_{a = (i, j) \in D}} (r_a - q_a) \ub_j^\w \bar{y}_{ij}^{\w}x_a,\quad  && (\bar{y}^\w, \bar{z}^\w) \in K^{\w},\ \w = (s, t) \in \W \label{eq:optimalitycut} \\
		&& \theta^\w & \geq 0, && \w \in \W \\
		&& x_a & \in \{0, 1\}, && a \in D. \label{eq:bendersintegrality}
	\end{alignat}
\end{subequations}
The Benders algorithm begins by solving the master problem with no constraints of the form \eqref{eq:optimalitycut} to obtain a candidate solution $(\bar{x}, \bar{\theta})$. At iteration $k$ of the Benders cutting plane algorithm, a dual subproblem \eqref{eq:dualsubproblem} is solved for each scenario using the candidate solution $(\bar{x}, \bar{\theta})$ to find a dual-feasible extreme point $(\bar{y}^{\w}, \bar{z}^{\w})$. If a cut in the form of constraint \eqref{eq:optimalitycut} cuts off the candidate solution $\bar{\theta}^\w$, this cut is added to the Benders master problem by including the point in the set $K^\w$. The updated master problem is solved to generate a new candidate solution $(\bar{x}, \bar{\theta})$. This process is repeated until none of the scenario cuts constructed at an iteration of the algorithm cut off the candidate solution obtained by solving the master problem at the previous iteration.

Benders decomposition can also be used in a branch-and-cut algorithm. The integrality constraint
\eqref{eq:bendersintegrality} is relaxed and enforced within the branch-and-bound tree. At each integer-feasible
solution $(\bar{x}, \bar{\theta})$ obtained at a node in the master problem branch-and-bound tree, a dual
subproblem \eqref{eq:dualsubproblem} is solved for each scenario. Violated cuts are added to the LP formulation of that
node and it is re-solved. When no violated inequalities are found for an integer-feasible solution, the upper bound may be updated and
the node pruned.

Pan and Morton \cite{pan2008} enhance the Benders branch-and-cut algorithm by using optimal solutions to scenario subproblems from previous iterations to create step
inequalities for the relaxed Benders master problem. These additional inequalities are added to the formulation as cuts
to tighten the LP relaxation of the
master problem. This leads to a smaller branch-and-bound tree, which in turn reduces the time spent solving mixed-integer programs. 

The general-purpose Benders approach tested by Bodur~\etal \cite{bodur2016} is very effective in solving SNIP. A key
implementation detail of this approach is to first solve the relaxed Benders master problem,
obtained by removing the integrality restriction on $x$. Once no more Benders inequalities can be added to the LP
relaxation of the master problem, the integrality restriction on $x$ is restored, and the model, including the
identified Benders cuts, is passed to a mixed-integer programming solver to begin the Benders branch-and-cut algorithm. The solver adds its own general-purpose
integrality-based cuts to the formulation using the initial set of Benders cuts. This results in a stronger LP
relaxation bound, and reduces the size of the branch-and-bound tree. 
%Upon reaching an integer-feasible solution in the branch-and-bound process, Benders cuts are derived for each scenario and added to the formulation if they cut off the candidate solution.

 Bodur~\etal \cite{bodur2016} employed a smaller gap tolerance as a stopping criterion ($10^{-3}$) for their Benders
 branch-and-cut implementation than Pan
and Morton  \cite{pan2008} used in their experiments ($10^{-2}$). We implemented the Benders
branch-and-cut algorithm as in \cite{bodur2016}, and tested it with a relative optimality gap tolerance of $10^{-2}$ to compare the
results to those published in \cite{pan2008}.
With these matching tolerances, the Benders
algorithm ran $3$--$4$ times faster than Pan and Morton's algorithm. Although the difference in hardware used in these
experiments makes it
impossible to directly compare the performance of these methods, we conclude that the implementation of Benders
branch-and-cut described in 
\deleted{in} \cite{bodur2016} is currently among the most efficient ways to solve instances of SNIP, and hence we compare against this approach in our numerical experiments.

\section{Compact deterministic equivalent formulation}\label{sec:compact}
%We now describe an extensive formulation of SNIP that is more compact than formulation \eqref{eq:ext}. 
For scenarios
sharing a destination node, the DP optimality conditions \eqref{eq:dp} for SNIP are identical. Hence, the 
DEF \eqref{eq:ext} contains redundancies by repeating the LP formulation of these optimality conditions,
constraints \eqref{eq:pi1}--\eqref{eq:pi4}, for each destination node.
Motivated by this observation, we present a compact DEF of SNIP that groups together second-stage
constraints for scenarios with a common destination. Let $T$ be the set of unique destination nodes: $T = \{ t \in N \colon (s,t) \in \W$ for some $s \in N \}$. 
Second-stage variables $\pi_j^t$ now represent the probability of the attacker
successfully traveling from node $j \in N$ to destination $t \in T$ in any scenario having destination $t$. We
obtain the following new DEF for SNIP:
\begin{subequations}
	\label{eq:com}
	\begin{alignat}{3}
		\min_{x, \pi}\ && \quad\ \smashoperator{\sum\limits_{\w = (s, t) \in \W}} \ p_\w \pi_{s}^{t} &&& \label{eq:comobj} \\
		\textrm{s.t.} && \smashoperator{\sum\limits_{a \in D}} \c_a x_a & \leq b && \label{eq:combudget} \\
		&& \pi_i^t - r_a \pi_j^t & \geq 0, && a = (i,j) \in A \setminus D,\ t \in T \label{eq:com1} \\
		&& \pi_i^t - r_a \pi_j^t & \geq -(r_a - q_a) \ub_j^t x_a, \qquad && a = (i,j) \in D,\ t \in T \label{eq:com2} \\
		&& \pi_i^t - q_a \pi_j^t & \geq 0, && a = (i,j) \in D,\ t \in T \label{eq:com3} \\
		&& \pi_t^t & = 1, && t\in T \label{eq:com4} \\
		&& x_a &\in \{0,1\}, && a \in D. \label{eq:comcon1}
	\end{alignat}
\end{subequations}
Parameter $\ub_j^t$ is the value of the attacker's maximum-reliability path from $j \in N$ to $t \in T$ when no sensors are installed. By definition, $\ub_j^t = \ub_j^\w$ for all $\w = (s, t) \in \W$ and $j \in N$.
The objective function \eqref{eq:comobj} weights each $\pi_s^t$ variable by its respective scenario probability. The summation of these terms represents the overall probability of a successful attack, which the defender seeks to minimize.

\added{In some SNIP formulations (e.g., \cite{dimitrov2011}), the evasion probabilities $r_a$ and $q_a$ depend on the scenario. If these probabilities depend on both $s$ and $t$ for each $(s,t) \in \Omega$, the compact DEF \eqref{eq:com} is not valid. However, if $r_a$ and $q_a$ depend only on $t$, \eqref{eq:com} remains valid. If the evasion probabilities depend only on $s$, we can derive a compact formulation of \eqref{eq:ext} by considering a network with all arc directions and origin--destination pairs reversed.}
\begin{proposition}
	\label{prop:1}
	 \deleted{Each feasible solution of \eqref{eq:ext} admits a feasible solution of \eqref{eq:com}} \deleted{ of equal objective value, and vice versa.} \added{For any feasible solution of the LP relaxation of \eqref{eq:ext}, there exists a feasible solution of the LP relaxation of \eqref{eq:com} of equal or lesser objective value, and vice versa.}
\end{proposition}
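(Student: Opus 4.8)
The plan is to exhibit explicit feasibility-preserving maps in each direction that keep the first-stage vector $x$ fixed and only re-index or aggregate the second-stage variables. The structural observation I would record first is that, for a fixed $x \in [0,1]^{|D|}$ satisfying \eqref{eq:combudget}, the constraint system \eqref{eq:com1}--\eqref{eq:com4} on the block $\pi^t$ is \emph{identical} to the system \eqref{eq:pi1}--\eqref{eq:pi4} on $\pi^\w$ for every scenario $\w = (s,t) \in \W$ sharing the destination $t$. This is because $\ub_j^t = \ub_j^\w$ for all such $\w$, so the right-hand sides of \eqref{eq:com2} and \eqref{eq:pi2} coincide, and neither system involves the origin $s$ except through the objective. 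Hence the feasible $\pi^t$ vectors for destination $t$ are exactly the feasible $\pi^\w$ vectors for any $\w$ with that destination.

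For the direction from \eqref{eq:com} to \eqref{eq:ext}, I would copy each destination block into every scenario sharing that destination: given a feasible $(x, \{\pi^t\}_{t \in T})$ of the LP relaxation of \eqref{eq:com}, set $\pi_i^\w := \pi_i^t$ for each $\w = (s,t) \in \W$ and leave $x$ unchanged. By the observation above this is feasible for the LP relaxation of \eqref{eq:ext}, and the objective is unchanged since $\sum_{\w=(s,t)} p_\w \pi_s^\w = \sum_{\w=(s,t)} p_\w \pi_s^t$, so this direction in fact yields equal objective value.

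For the reverse direction I would aggregate by a pointwise minimum over scenarios with a common destination: given a feasible $(x, \{\pi^\w\})$ of the LP relaxation of \eqref{eq:ext}, set $\pi_i^t := \min\{\pi_i^\w : \w = (s,t) \in \W\}$ for each $t \in T$ and $i \in N$, again keeping $x$ fixed. The objective then satisfies $p_\w \pi_s^t \le p_\w \pi_s^{(s,t)}$ for every $\w = (s,t)$, because the minimum over all scenarios with destination $t$ is at most the value contributed by scenario $(s,t)$ itself; thus the resulting objective is no larger. The one nontrivial point, and the step I expect to be the main obstacle, is verifying feasibility of this pointwise minimum.

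That feasibility check reduces to a closure lemma: the feasible region of \eqref{eq:com1}--\eqref{eq:com4} for fixed $x$ is closed under pointwise minimum. Each inequality \eqref{eq:com1}--\eqref{eq:com3} can be written as $\pi_i \ge c\,\pi_j + d$ with a nonnegative coefficient $c \in \{r_a, q_a\}$ and a constant $d$ (namely $0$ for \eqref{eq:com1} and \eqref{eq:com3}, and $-(r_a - q_a)\ub_j^t x_a$ for \eqref{eq:com2}); crucially, $d$ is the \emph{same} across all scenarios sharing $t$, precisely because $\ub_j^t$ is common to them. If $\pi^{(1)}, \pi^{(2)}$ both satisfy such an inequality, then for $\pi = \min(\pi^{(1)}, \pi^{(2)})$ one has $c\,\pi_j + d = \min_k (c\,\pi_j^{(k)} + d) \le \min_k \pi_i^{(k)} = \pi_i$, using $c \ge 0$; the equalities \eqref{eq:com4} survive because $\pi_t^\w = 1$ in every scenario. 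Iterating over the finitely many scenarios with destination $t$ gives feasibility of $\pi^t$. Having both maps, I would conclude by observing that the two ``equal or lesser'' bounds together force the optimal values of the two LP relaxations to coincide.
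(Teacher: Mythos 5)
Your proposal is correct and follows essentially the same route as the paper's proof: duplicating each destination block $\hat{\pi}^t$ across all scenarios sharing destination $t$ for one direction, and aggregating via the pointwise minimum $\hat{\pi}_i^t = \min_{s} \bar{\pi}_i^{st}$ for the other, with feasibility of the minimum resting on exactly the same observations (nonnegative coefficients $r_a, q_a$ and the scenario-independence of $\ub_j^t$, hence of the constant term in \eqref{eq:com2}). Your packaging of the feasibility check as a closure-under-minimum lemma iterated over scenarios is only a presentational variant of the paper's direct computation with $\min_{s \in \mathcal{S}(t)}$.
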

\begin{proof}
	\deleted{Let $(\bar{x}, \bar{\pi})$ be a solution to \eqref{eq:ext}.} \deleted{For all $t \in T$, select $s_t \in N$ such that} \deleted{$\w = (s_t, t) \in \W$.} \deleted{Let $\hat{\pi}^t_i \coloneqq \bar{\pi}^{\w}_i$ for all $i \in N$ and $t \in T$,} \deleted{and let $\hat{x}_{ij} \coloneqq \bar{x}_{ij}$ for all $(i, j) \in D$.} \deleted{$(\hat{x}, \hat{\pi})$ is feasible} \deleted{to \eqref{eq:com} with objective value} \deleted{$\sum_{\w = (s, t) \in \W} p_{\w} \hat{\pi}_{s}^{t} = \sum_{\w = (s, t) \in \W} p_{\w} \bar{\pi}_{s}^{\w}$.}
	
	\added{Let $(\bar{x}, \bar{\pi})$ be a solution to the LP relaxation of \eqref{eq:ext}. For $t \in T$, let $\mathcal{S}(t) \coloneqq \{s \in N \colon (s,t) \in \W\}$.} \added{By \eqref{eq:pi1}--\eqref{eq:pi4}, it holds that}
	\begin{alignat*}{2}
		\added{\min_{s \in \mathcal{S}(t)} \bar{\pi}_i^{st}} & \added{\geq \min_{s \in \mathcal{S}(t)} r_a \bar{\pi}_j^{st},} && \added{a = (i,j) \in A \setminus D,\ t \in T} \\
		\added{\min_{s \in \mathcal{S}(t)} \bar{\pi}_i^{st}} & \added{\geq \min_{s \in \mathcal{S}(t)} [r_a \bar{\pi}_j^{st} - (r_a - q_a) \ub_j^t \bar{x}_a], \qquad} && \added{a = (i,j) \in D,\ t \in T} \\
		\added{\min_{s \in \mathcal{S}(t)} \bar{\pi}_i^{st}} & \added{\geq \min_{s \in \mathcal{S}(t)} q_a \bar{\pi}_j^{st},} && \added{a = (i,j) \in D,\ t \in T} \\
		\added{\min_{s \in \mathcal{S}(t)} \bar{\pi}_t^{st}} & \added{= 1,} && \added{t \in T.}
	\end{alignat*}
	\added{Let $\hat{\pi}_i^t \coloneqq \min_{s \in \mathcal{S}(t)} \bar{\pi}_i^{st}$ for all $t \in T,\ i \in N$. Then, because
		$\min_{s \in \mathcal{S}(t)} r_a \bar{\pi}_j^{st} = r_a \min_{s \in \mathcal{S}(t)} \bar{\pi}_j^{st}$,
		$\min_{s \in \mathcal{S}(t)} q_a \bar{\pi}_j^{st} = q_a \min_{s \in \mathcal{S}(t)} \bar{\pi}_j^{st}$, and}
	\begin{align*}
		\added{\min_{s \in \mathcal{S}(t)} [r_a \bar{\pi}_j^{st} - (r_a - q_a) \ub_j^t \bar{x}_a]} &\added{= r_a \min_{s \in \mathcal{S}(t)} \bar{\pi}_j^{st} - (r_a - q_a) \ub_j^t \bar{x}_a,}
	\end{align*}
	$(\bar{x}, \hat{\pi})$ \added{is feasible to the LP relaxation of \eqref{eq:com}. $(\bar{x}, \hat{\pi})$ has objective value}
	\begin{align*}
		\added{\smashoperator{\sum_{\w = (s, t) \in \W}} \ p_{\w} \hat{\pi}_{s}^{t} = \enspace \smashoperator{ \sum_{\w = (s, t) \in \W}} \ p_{\w} \min_{s' \in \mathcal{S}(t)} \bar{\pi}_{s}^{s' t} \leq \enspace \smashoperator{ \sum_{\w = (s, t) \in \W}} \ p_{\w} \bar{\pi}_{s}^{st}.}
	\end{align*}
	Now, let $(\hat{x}, \hat{\pi})$ be a solution to the LP relaxation of \eqref{eq:com}. Let $\bar{\pi}_i^{\w} \coloneqq \hat{\pi}_i^t$ for all $i \in N$ and $\w = (s, t) \in \W$. \deleted{Let $\bar{x}_{ij} \coloneqq \hat{x}_{ij}$ for all $(i, j) \in D$.} $(\replaced{\hat{x}}{\bar{x}}, \bar{\pi})$ is a solution to \eqref{eq:ext} with objective value $\sum_{\w \added{= (s,t)} \in \W} p_{\w} \bar{\pi}_{s}^{\w} = \sum_{\w \added{= (s,t)} \in \W} p_{\w} \hat{\pi}_{s}^{t}$. 
\end{proof}
\added{It follows from Proposition~\ref{prop:1} that the optimal objective values of the LP relaxations of \eqref{eq:ext} and \eqref{eq:com} are equal, and also the optimal objective values of the original problems \eqref{eq:ext} and \eqref{eq:com} are equal.}

A Benders algorithm can be applied to the compact formulation by introducing variables $\theta^t$ to represent the
probability-weighted sum of maximum-reliability path values for scenarios ending at node $t \in T$.  We experimented
with a Benders decomposition on the DEF \eqref{eq:com} using the Benders algorithm of Bodur~\etal
\cite{bodur2016}. We found that the Benders algorithm performed much worse on this formulation than on the 
DEF \eqref{eq:ext}. The reason for this poor performance is that the root relaxation obtained after the
mixed-integer programming solver added its general-purpose cuts was much weaker when using a Benders reformulation of
\eqref{eq:com}, as compared to the Benders reformulation \eqref{eq:benders}. This is consistent with the results of
\cite{bodur2016}, which indicates that the integrality-based cuts derived in the formulation \eqref{eq:benders} can be
stronger than those derived in a ``projected'' Benders master problem that only uses $\theta^t$ variables.

\added{We also observed the Benders decomposition on the compact DEF \eqref{eq:com} performed worse than directly solving \eqref{eq:com}. Overall, there does not appear to be any advantage to implementing a Benders decomposition for the compact DEF.}

\section{Path-based formulation and cuts}\label{sec:pathbased}
We now derive a new mixed-integer linear programming formulation of SNIP based on the path-based formulation \eqref{eq:path}. With the introduction of binary variables $x_a$ to denote whether network arc $a \in D$ is interdicted, we map the set function $h_P$ to a vector function $\bar{h}_P$. In particular,
for $S \subseteq D$ we let $\charvec \in \{0, 1\}^{\added{|D|}}$ be the characteristic vector of the set $S$: $\charvec_a = 1$ if $a \in S$, and $0$ otherwise. 
Then we define $\bar{h}_P: \{0,1\}^{\added{|D|}} \rightarrow \mathbb{R}_{\added{+}}$ as 
\begin{align*}
	\bar{h}_P(x) \coloneqq \left[ \prod_{a \in P} r_a \right] \left[ \prod_{a \in P \cap D} \left( \frac{q_a}{r_a}
	\right)^{x_a} \right]
\end{align*}
so that $h_P(S) = \bar{h}_P(\charvec)$ for $S \subseteq D$.

Then model \eqref{eq:path} can be formulated as
\begin{alignat}{3}
	\label{eq:path2}
	\begin{aligned}
		\min_{x,\pathpi} && \smashoperator{\sum\limits_{\w = (s,t) \in \W}}\ p_\w & \pathpi_s^t && \\
		\textrm{s.t.} && \sum\limits_{a \in D} \c_a x_a & \leq b && \\
		&& \pathpi_s^t & \geq \max\{\bar{h}_P(x) \colon P \in \P_{st}\}, && (s, t) \in \W \\
		&& x_a & \in \{0, 1\}, && a \in D.
	\end{aligned}
\end{alignat}
\added{In contrast to the compact DEF \eqref{eq:com}, formulation \eqref{eq:path2} can be extended to the case where $r_a$ and $q_a$ depend on the scenario.}

We use the special structure of $h_P$ for each $P \in \P_{st}$ to build a linear formulation of model \eqref{eq:path2}.
\begin{definition}[E.g., \cite{schrijver2003}]
	$h \colon 2^\N \rightarrow \R$ is a \textit{supermodular} set function over a ground set $\N$ if $h(S_1 \cup \{\elm\}) - h(S_1) \leq h(S_2 \cup \{\elm\}) - h(S_2)$ for all $S_1 \subseteq S_2 \subseteq \N$ and $\elm \in \N \setminus S_2$.
\end{definition}
\begin{proposition}
	$h_P(\cdot)$ is a supermodular set function.
\end{proposition}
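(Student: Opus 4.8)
The plan is to verify the supermodular inequality directly from the definition, exploiting the multiplicative structure of $h_P$ over the ground set $D$ of interdictable arcs. Fix $S_1 \subseteq S_2 \subseteq D$ and an element $a \in D \setminus S_2$, and compare the marginal increments $h_P(S_1 \cup \{a\}) - h_P(S_1)$ and $h_P(S_2 \cup \{a\}) - h_P(S_2)$. I would split into two cases according to whether $a$ lies on the path $P$. If $a \notin P$, then $P \cap (S \cup \{a\}) = P \cap S$ for every $S$, so $h_P(S \cup \{a\}) = h_P(S)$ and both increments vanish, making the inequality trivial. If instead $a \in P$, then adjoining $a$ to a set $S$ not containing it inserts exactly one new factor $q_a / r_a$ into the product, so that $h_P(S \cup \{a\}) = (q_a/r_a)\, h_P(S)$ and hence
\[
	h_P(S \cup \{a\}) - h_P(S) = \left( \frac{q_a}{r_a} - 1 \right) h_P(S).
\]

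Two elementary observations then finish the argument. First, since $q_a \in [0, r_a)$, the scalar $q_a/r_a - 1$ is strictly negative. Second, because $h_P(S)$ is the product of the positive constant $\prod_{e \in P} r_e$ with the factors $q_e/r_e \in [0,1)$ over $e \in P \cap S$, enlarging $S$ only appends further nonnegative factors bounded above by $1$; thus $h_P$ is nonincreasing, i.e., $h_P(S_1) \geq h_P(S_2) \geq 0$. Substituting the marginal formula, the claim $h_P(S_1 \cup \{a\}) - h_P(S_1) \leq h_P(S_2 \cup \{a\}) - h_P(S_2)$ reduces to $(q_a/r_a - 1)\,(h_P(S_1) - h_P(S_2)) \leq 0$, which holds as the product of a nonpositive and a nonnegative factor.

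I do not expect a real obstacle here: once the multiplicative structure is isolated, the proof is bookkeeping plus a sign check. The one point meriting care is that the factors $q_e/r_e$ can equal zero, so $h_P(S)$ may vanish; I would therefore avoid dividing through by $h_P(S)$ and keep the reasoning in the multiplicative form above, where the final sign argument remains valid whether or not any $h_P$ value is zero.
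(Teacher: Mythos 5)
Your proof is correct and follows essentially the same route as the paper's: both write the marginal increment as $\left(\frac{q_a}{r_a}-1\right)h_P(S)$ for $a \in P$, dismiss the $a \notin P$ case as trivial, and conclude from the sign of $\frac{q_a}{r_a}-1$ together with the monotonicity $h_P(S_1) \geq h_P(S_2) \geq 0$. The only difference is presentational---the paper folds the sign/monotonicity reasoning into a single chain of product inequalities, while you state it explicitly---so there is nothing further to add.
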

\begin{proof}
	Let $S_1, S_2 \subseteq D$ with $S_1 \subseteq S_2$. Let $\arc \in D \setminus S_2$. The result is immediate if $\arc \notin P$. Therefore, assume $\arc \in P$ to obtain
	\begin{alignat*}{2}
		h_P(S_1 \cup \{\arc\}) - h_P(S_1) &= \left( \prod_{a \in P} r_a \right) \left( \prod_{a \in P \cap S_1} \frac{q_a}{r_a} \right) \left(\frac{q_{\arc}}{r_{\arc}} - 1 \right) && \\
		&\leq \left( \prod_{a \in P} r_a \right) \left( \prod_{a \in P \cap S_2} \frac{q_a}{r_a} \right) \left(\frac{q_{\arc}}{r_{\arc}} - 1 \right) && \\
		&= h_P(S_2 \cup \{\arc\}) - h_P(S_2). &&
	\end{alignat*}
\end{proof}
The maximum of a set of supermodular functions is not in general a supermodular function, so $\max_{P \in \P_{st}}
h_P(S)$ is not necessarily supermodular. %A counterexample in our context is presented in Appendix \ref{app:counterexample}. 
To exploit the supermodular structure for each individual path, we consider an equivalent formulation of \eqref{eq:path2} that contains an inequality for every scenario $(s, t) \in \W$ and every path from $s$ to $t$:
\begin{subequations}
	\label{eq:path3}
	\begin{alignat}{3}
		\min_{x,\pathpi} && \quad\enspace\ \smashoperator{\sum\limits_{\w = (s,t) \in \W}}\ p_\w &\pathpi_s^t && \\
		\textrm{s.t.} && \sum\limits_{a \in D} \c_a x_a & \leq b && \label{eq:path3budget} \\
		&& \pathpi_s^t & \geq \bar{h}_P(x), \qquad && P \in \P_{st},\ (s, t) \in \W \label{eq:path3path} \\
		&& x_a & \in \{0, 1\}, && a \in D.
	\end{alignat}
\end{subequations}
Each second-stage constraint includes a supermodular function on the right-hand side.

Consider the mixed-integer feasible region of \eqref{eq:path3} for a fixed scenario $(s, t) \in \W$ and path $P \in \P_{st}$\added{, without constraint \eqref{eq:path3budget}}:
\begin{align}
	\HP \coloneqq \left\lbrace (x,\pathpi) \in \{0,1\}^{\added{|D|}} \times \R \colon \pathpi \geq \bar{h}_P(x) \right\rbrace \label{eq:FP}.
\end{align}
We are interested in a linear formulation of $\HP$. Let $\rhop{a}(S) \coloneqq h_P(S \cup \{a\}) - h_P(S)$ be the marginal difference function of the set function $h_P$ with respect to arc $a \in D$. Nemhauser~\etal \cite{nemhauser1978} provide an exponential family of linear inequalities that can be used to define $\HP$. Applied to $\HP$, these inequalities are
\begin{align}
	\pathpi &\geq h_P(S) - \smashoperator{\sum\limits_{a \in S}} \rhop{a}(D \setminus \{a\})(1 - x_a) + \smashoperator{\sum\limits_{a \in D \setminus S}} \rhop{a}(S) x_a,\phantom{\emptyset} S \subseteq D \label{eq:ineq1} \\
	\pathpi &\geq h_P(S) - \smashoperator{\sum\limits_{a \in S}} \rhop{a}(S \setminus \{a\})(1 - x_a) + \smashoperator{\sum\limits_{a \in D \setminus S}} \rhop{a}(\emptyset) x_a,\phantom{D} S \subseteq D \label{eq:ineq2}.
\end{align}
Only one of the sets of inequalities \eqref{eq:ineq1} and \eqref{eq:ineq2} is required to define $\HP$
\cite{nemhauser1988}. Using these inequalities, the feasible region of $\HP$ can be formulated as
\begin{align*}
	\HP \coloneqq \left\lbrace (x,\pathpi) \in \{0,1\}^{\added{|D|}} \times \R \colon \eqref{eq:ineq1} \textrm{ or } \eqref{eq:ineq2} \right\rbrace.
\end{align*}
The number of inequalities required to define model \eqref{eq:path3} in this manner grows exponentially with the number
of arcs in a path, and the number of $s$--$t$ paths grows exponentially with the size of the network. Enumerating all
$s$--$t$ paths for scenario $(s, t) \in \W$ is impractical. Nevertheless, \eqref{eq:path3} lends itself to a delayed
constraint generation algorithm. Instead of adding inequalities for all possible paths, we add violated valid
inequalities as needed (see Section \ref{subsec:bncsummary}).

To demonstrate the potential of this formulation, we next show that if we could obtain
the convex hull of each set $\HP$, \replaced{we would have}{this would yield} a relaxation that is at least as strong as the LP relaxation of the
DEF \eqref{eq:ext} (and \eqref{eq:com}). \replaced{Although we may not be able to explicitly represent $\conv(\HP)$, this result}{This} implies that the path-based formulation has the potential
to yield a better LP relaxation if we can identify strong valid inequalities for $\conv(\HP)$.
\begin{theorem}\label{theorem:1}
	Consider the following LP relaxation of \eqref{eq:path3}:
	\begin{alignat}{3}
		\label{eq:path6}
		\begin{aligned}
			\min_{\added{x,} \pathpi}\ && \smashoperator{\sum\limits_{\w = (s,t) \in \W}} p_\w \replaced{\pathpi_s^t}{\pathpi_s^\w} &&& \\
			\emph{s.t.}\ && \sum\limits_{a \in D} \c_a x_a & \leq b && \\
			&& (x, \replaced{\pathpi_s^t}{\pathpi_s^\w}) & \in \conv(\HP), &&P \in \P_{st},\ \w = (s, t) \in \W \\
			&& x_a & \in [0, 1], && a \in D.
		\end{aligned}
	\end{alignat}
	For all $(\bar{x},\bar{\pathpi})$ feasible to \eqref{eq:path6}, there exists $\hat{\pi}$ such that
	$(\bar{x},\hat{\pi})$ is feasible to \eqref{eq:ext} and has objective value in \eqref{eq:ext} not greater than the
	objective value of $(\bar{x},\bar{\pathpi})$ in \eqref{eq:path6}.
\end{theorem}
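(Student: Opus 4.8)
The plan is to take any $(\bar x,\bar\pi)$ feasible to \eqref{eq:path6} and exhibit a $\hat\pi$ making $(\bar x,\hat\pi)$ feasible to the LP relaxation of \eqref{eq:ext} (which is what ``feasible to \eqref{eq:ext}'' must mean here, since $\bar x$ is fractional) with no larger objective. For the construction I would, for each scenario $\w=(s,t)$, let $\hat\pi_i^\w$ be the pointwise-minimal solution of the subproblem constraints \eqref{eq:fixedpi1}--\eqref{eq:fixedpi4} at $x=\bar x$ -- equivalently, the values produced by the dynamic program \eqref{eq:dp} run with $x=\bar x$, so that $\hat\pi_s^\w=E^{st}(\bar x)$. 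With this choice $(\bar x,\hat\pi)$ satisfies \eqref{eq:pi1}--\eqref{eq:pi4} automatically, and it satisfies the budget \eqref{eq:extcon1} because $\bar x$ does; hence it is feasible. Since the two objectives are $\sum_{\w=(s,t)}p_\w\bar\pi_s^t$ and $\sum_{\w=(s,t)}p_\w\hat\pi_s^\w$, everything reduces to proving $\hat\pi_s^\w\le\bar\pi_s^t$ for each scenario.

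Next I would extract the correct lower bound on $\bar\pi_s^t$ from the hull constraints. Fixing $\w=(s,t)$ and a path $P\in\P_{st}$, the membership $(\bar x,\bar\pi_s^t)\in\conv(\HP)$ lets me write $(\bar x,\bar\pi_s^t)$ as a convex combination of points of $\HP$, and reading off the $\pi$-coordinate gives $\bar\pi_s^t\ge g_P(\bar x)$, where $g_P$ is the convex envelope of $\bar h_P$ over $\{0,1\}^{|D|}$, i.e. $g_P(\bar x)=\min\{\sum_k\lambda_k\bar h_P(x^k)\colon\sum_k\lambda_k x^k=\bar x,\ x^k\in\{0,1\}^{|D|}\}$. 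Doing this for every path yields $\bar\pi_s^t\ge\max_{P\in\P_{st}}g_P(\bar x)$. I would emphasize that the weaker bound $\bar\pi_s^t\ge\bar h_P(\bar x)$ coming from convexity of $\bar h_P$ alone does \emph{not} suffice -- it already fails for a single interdictable arc with $q_a=0$ -- so the envelope supplied by $\conv(\HP)$ is exactly what is needed.

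The heart of the argument is then to show $E^{st}(\bar x)=\hat\pi_s^\w\le\max_{P\in\P_{st}}g_P(\bar x)$, which I would prove by backward induction along the dynamic program, establishing the stronger claim $\hat\pi_i^\w\le\max_{P\in\P_{it}}g_P(\bar x)$ for every node $i$. The base case $i=t$ is immediate. For the step, \eqref{eq:fixedpi1}--\eqref{eq:fixedpi3} give $\hat\pi_i^\w=\max_{a=(i,j)\in A}\Phi_a(\hat\pi_j^\w)$ for an explicit one-arc update $\Phi_a$; I pick the maximizing arc $a=(i,j)$, a path $P'\in\P_{jt}$ attaining $\max_{P'}g_{P'}(\bar x)\ge\hat\pi_j^\w$, and set $P=\{a\}\cup P'$. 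The key lemma is a single-arc lifting inequality $\Phi_a(\hat\pi_j^\w)\le g_P(\bar x)$. Because $\Phi_a$ is nondecreasing in $\hat\pi_j^\w$ and nonincreasing in the bound $u_j^\w$, I may replace $\hat\pi_j^\w$ by its upper bound $g_{P'}(\bar x)$ and $u_j^\w$ by the lower bound $\prod_{b\in P'}r_b=\bar h_{P'}(\mathbf 0)$, obtaining an upper bound on $\Phi_a(\hat\pi_j^\w)$; I then check this upper bound is at most every value of the splitting problem defining $g_P(\bar x)$, which reduces to two facts about the envelope of $\bar h_{P'}$, namely its convexity and $g_{P'}\le\bar h_{P'}(\mathbf 0)$ (the uninterdicted value being maximal). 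The case $a\notin D$ is the trivial scaling $\Phi_a(\hat\pi_j^\w)=r_a\hat\pi_j^\w\le r_a g_{P'}(\bar x)=g_P(\bar x)$. This gives $\hat\pi_i^\w\le g_P(\bar x)\le\max_{P''\in\P_{it}}g_{P''}(\bar x)$, completing the induction; combining with the bound of the previous paragraph and summing $p_\w\hat\pi_s^\w\le p_\w\bar\pi_s^t$ over scenarios finishes the proof.

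I expect the single-arc lifting inequality to be the main obstacle and the place requiring the most care. Two points must be handled there: verifying the monotonicity $\hat\pi_j^\w\le u_j^\w$ of the dynamic-programming value under interdiction (so that substituting the bounds moves the inequality in the correct direction), and checking that the DEF's use of one common, possibly slack, bound $u_j^\w$ for all paths through $j$ can only decrease $E^{st}$ and therefore preserves the direction of the inequality. I would also treat the degenerate case in which the concatenation $\{a\}\cup P'$ is not simple (and, more generally, graphs with cycles) by restricting attention to simple paths and using that all reliabilities lie in $[0,1]$, so that deleting a cyclic portion of a walk only increases its value.
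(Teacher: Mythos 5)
Your proposal is correct in substance, but it proves the key inequality by a genuinely different route than the paper. Both arguments share the same outer reduction: build $\hat{\pi}$ scenario by scenario as an optimal (pointwise-minimal) solution of the subproblem \eqref{eq:extfixed} at $x=\bar{x}$, so that everything hinges on the per-scenario inequality $\extfunc^{st}(\bar{x}) \leq \pathfunc^{st}(\bar{x})$ --- this is exactly the paper's Lemma~\ref{lemma:1}. The paper proves that inequality by LP duality: taking an optimal primal solution of \eqref{eq:extfixed}, either some path $P^*$ has one of \eqref{eq:fixedpi1}--\eqref{eq:fixedpi3} tight on every arc, in which case dropping all off-path constraints leaves the optimum unchanged, and since the integer version of the on-path system projects exactly onto $\HPstar$, its LP projection contains $\conv(\HPstar)$, giving $\extfunc^{st}(\bar{x}) \leq \min\{\pathpi_s \colon (\bar{x},\pathpi_s)\in\conv(\HPstar)\} \leq \pathfunc^{st}(\bar{x})$; or no such path exists, and complementary slackness zeroes the dual path flows, forcing $\extfunc^{st}(\bar{x})=0$. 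You instead describe $\conv(\HP)$ explicitly through the convex envelope $g_P$ of $\bar{h}_P$ and propagate bounds arc by arc via a single-arc lifting lemma. That lemma is true: splitting $\bar{x}$ into binary points, using $\ub_j^\w \geq \prod_{b\in P'} r_b$ and $\bar{h}_{P'}(x^k) \leq \bar{h}_{P'}(\mathbf{0})$ for binary $x^k$, the required inequalities do go through, for both branches of the max defining your $\Phi_a$. What the paper's duality argument buys is brevity and an automatic escape from graph-structure issues (complementary slackness handles everything without ever mentioning cycles); what yours buys is an elementary, duality-free argument that makes explicit where the DEF relaxation is weak relative to the hull (the coefficient $\ub_j^\w$ versus the envelope), at the price of more bookkeeping.

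Two places in your sketch need real repair before it is a proof. First, ``backward induction along the dynamic program'' is not well-founded on a cyclic graph; you must instead induct on value-iteration (Kleene) steps converging to the least feasible solution of \eqref{eq:fixedpi1}--\eqref{eq:fixedpi4}, and your walk-to-path reduction should be stated as two lemmas: if the concatenation $\{a\}\cup P'$ revisits $i$, write it as a cycle $C$ plus a simple path $P'_2\in\P_{it}$, note $\bar{h}_{\{a\}\cup P'} \leq \bar{h}_{P'_2}$ pointwise on the binary points (all reliabilities lie in $[0,1]$), and use that pointwise domination is inherited by convex envelopes, so $g_{\{a\}\cup P'}(\bar{x}) \leq g_{P'_2}(\bar{x})$. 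Both facts are true, but they are the actual content of the cycle fix, not a side remark. Second, your parenthetical that $\hat{\pi}$ equals ``the values produced by the dynamic program \eqref{eq:dp} run with $x=\bar{x}$'' is incorrect for fractional $\bar{x}$: the recursion \eqref{eq:dp} and the LP system \eqref{eq:fixedpi1}--\eqref{eq:fixedpi3} coincide only at integer $x$, precisely because of the $\ub_j^\w$ term. This slip is harmless, since the fixed-point equations you actually use in the induction are the LP ones, but it should be removed.
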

The above theorem is a consequence of Lemma~\ref{lemma:1}. For a fixed $(s, t) \in \W$ and $x \in [0, 1]^{\added{|D|}}$ satisfying $\sum_{a \in D} \c_a x_a \leq b$, define
\begin{alignat*}{3}
	\pathfunc^{st}(x) \coloneqq \min_{\replaced{\pathpi_s}{\pathpi_s^t}}\ && \replaced{\pathpi_s}{\pathpi_s^t} \qquad &&& \\
	\textrm{s.t.}\ && (x, \replaced{\pathpi_s}{\pathpi_s^t}) & \in \conv(\HP),\ \enspace && P \in \P_{st}.
\end{alignat*}
Recall that $\extfunc^{st}(x)$ is defined in \eqref{eq:extfixed}.

\begin{lemma}\label{lemma:1}
	Let $\w = (s, t) \in \W$ and $\replaced{\bar{x}}{x} \in [0, 1]^{\added{|D|}}$ satisfy $\sum_{a \in D} \c_a \replaced{\bar{x}_a}{x_a} \leq b$. Then,
	\[ \extfunc^{st}(\replaced{\bar{x}}{x}) \leq \pathfunc^{st}(\replaced{\bar{x}}{x}). \]
\end{lemma}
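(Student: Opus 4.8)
The plan is to prove $\extfunc^{st}(\bar{x}) \le \pathfunc^{st}(\bar{x})$ by exhibiting an explicit vector $\hat{\pi}$ that is feasible for the linear program \eqref{eq:extfixed} defining $\extfunc^{st}(\bar{x})$ and whose objective value $\hat{\pi}_s$ equals $\pathfunc^{st}(\bar{x})$. For a simple path $P'$ from a node $i$ to $t$, write $\check{h}_{P'}$ for the convexification of $\bar{h}_{P'}$ over the integer points of the cube, so that $\check{h}_{P'}(\bar{x}) = \min\{\pathpi \colon (\bar{x},\pathpi) \in \conv(H_{P'})\}$. Since a feasible $\pathpi_s$ in the definition of $\pathfunc^{st}$ must satisfy $\pathpi_s \ge \check{h}_P(\bar{x})$ for every $P \in \P_{st}$, we have $\pathfunc^{st}(\bar{x}) = \max_{P \in \P_{st}} \check{h}_P(\bar{x})$. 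Guided by this, I would set $\hat{\pi}_i \coloneqq \max_{P' \in \P_{it}} \check{h}_{P'}(\bar{x})$ for each node $i$ (assuming, without loss of generality after pruning, that every node reaches $t$). Then $\hat{\pi}_t = 1$, because the only simple $t$--$t$ path is empty, so \eqref{eq:fixedpi4} holds; and $\hat{\pi}_s = \pathfunc^{st}(\bar{x})$ by construction, so it only remains to verify the arc constraints \eqref{eq:fixedpi1}--\eqref{eq:fixedpi3}.

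The common mechanism for the arc constraints is to take, for an arc $a = (i,j)$, a path $P'$ attaining $\hat{\pi}_j = \check{h}_{P'}(\bar{x})$ and to compare $\hat{\pi}_i$ against the path $\{a\} \cup P'$. Two elementary facts about convexification over the integer points drive the argument: it is monotone (if $f \ge g$ at every integer point then $\check{f} \ge \check{g}$), it satisfies $\check{f} = c\,\check{g}$ when $f = cg$ with $c > 0$, and the convexification of $f + L$ equals $\check{f} + L$ for any affine $L$. Prepending $a$ multiplies the path-reliability function by $r_a$ if $a \notin D$, and by a factor in $\{r_a,q_a\}$ (hence at least $q_a$) at each integer point if $a \in D$. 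Combined with $\hat{\pi}_i \ge \check{h}_{\{a\} \cup P'}(\bar{x})$, these facts immediately give $\check{h}_{\{a\} \cup P'}(\bar{x}) \ge r_a \hat{\pi}_j$ for \eqref{eq:fixedpi1} and $\ge q_a \hat{\pi}_j$ for \eqref{eq:fixedpi3}.

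I expect the main obstacle to be constraint \eqref{eq:fixedpi2}. Here I would first show that at every integer point $x$ one has $\bar{h}_{\{a\} \cup P'}(x) \ge r_a \bar{h}_{P'}(x) - (r_a - q_a)\ub_j^\w x_a$: the two sides agree when $x_a = 0$, and when $x_a = 1$ the inequality collapses to $\ub_j^\w \ge \bar{h}_{P'}(x)$, which holds because $\ub_j^\w$ is the maximum-reliability value from $j$ to $t$ with no sensors installed and thus dominates the reliability of $P'$ under any interdiction. Applying the monotonicity and affine properties above to this pointwise inequality yields $\check{h}_{\{a\} \cup P'}(\bar{x}) \ge r_a \check{h}_{P'}(\bar{x}) - (r_a - q_a)\ub_j^\w \bar{x}_a = r_a \hat{\pi}_j - (r_a - q_a)\ub_j^\w \bar{x}_a$, whence \eqref{eq:fixedpi2} follows. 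One technical wrinkle I would address is that $\{a\} \cup P'$ need not be simple: if $i$ already lies on $P'$, then the suffix $P''$ of $P'$ from $i$ to $t$ satisfies $\bar{h}_{P''} \ge \bar{h}_{P'}$ pointwise, so $\hat{\pi}_i \ge \check{h}_{P''}(\bar{x}) \ge \hat{\pi}_j$, and since the right-hand sides of \eqref{eq:fixedpi1}--\eqref{eq:fixedpi3} are each bounded above by $\hat{\pi}_j$ in this case, all three hold at once. With every constraint verified, $\hat{\pi}$ is feasible for \eqref{eq:extfixed}, giving $\extfunc^{st}(\bar{x}) \le \hat{\pi}_s = \pathfunc^{st}(\bar{x})$.
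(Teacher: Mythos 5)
Your proof is correct, and it takes a genuinely different route from the paper's. The paper argues via LP duality: it takes optimal primal and dual solutions of \eqref{eq:extfixed} and \eqref{eq:dualsubproblem} and splits into two cases according to whether some $s$--$t$ path $P^*$ has one of \eqref{eq:fixedpi1}--\eqref{eq:fixedpi3} tight on every arc. If such a path exists, dropping the constraints of all other arcs does not change the optimal value, and the remaining single-path LP is a valid mixed-integer formulation of $\HPstar$ whose projection contains $\conv(\HPstar)$, which gives $\extfunc^{st}(\bar{x}) \leq \min\{\pathpi_s \colon (\bar{x},\pathpi_s) \in \conv(\HPstar)\} \leq \pathfunc^{st}(\bar{x})$; if no such path exists, complementary slackness forces the dual objective, and hence $\extfunc^{st}(\bar{x})$, to equal $0$. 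You instead avoid duality entirely and construct an explicit feasible point of \eqref{eq:extfixed}: setting $\hat{\pi}_i$ to the maximum over $P' \in \P_{it}$ of the convex-envelope value $\check{h}_{P'}(\bar{x})$, and verifying the three arc constraints via monotonicity, positive scaling, and affine-shift properties of envelopes, with the pointwise bound $\ub_j^\w \geq \bar{h}_{P'}(x)$ doing the work for \eqref{eq:fixedpi2} and the suffix argument handling non-simple concatenations. Your approach is more constructive and self-contained (it produces a certificate $\hat{\pi}$ with $\hat{\pi}_s$ exactly equal to $\pathfunc^{st}(\bar{x})$, not just an inequality between optimal values), whereas the paper's is shorter given that the dual \eqref{eq:dualsubproblem} is already set up for the Benders algorithm. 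One small repair: your ``without loss of generality after pruning'' step is not literally WLOG, since deleting nodes that cannot reach $t$ removes constraints from \eqref{eq:extfixed}, and feasibility for the pruned LP does not by itself give feasibility for the original one; the clean fix is to keep those nodes and set $\hat{\pi}_i \coloneqq 0$ for each of them, which satisfies all incident constraints because any arc leaving such a node leads to another such node, and the right-hand sides of \eqref{eq:fixedpi1}--\eqref{eq:fixedpi3} are then nonpositive.
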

\begin{proof}	
	The vector of all ones in $\R^{|N|}$ is feasible to \eqref{eq:extfixed}. A feasible solution to
	\eqref{eq:dualsubproblem} can be constructed as follows. Let $P = \{\added{(i_0,i_1),} (i_1,i_2), \deleted{(i_2,i_3),} \ldots, \replaced{(i_{|P|-1},i_{|P|})}{(i_{n-1},i_n)} \}
	\in \P_{st}$ be a simple $s$--$t$ path, where $\replaced{i_0}{i_1} = s$ and $\replaced{i_{|P|}}{i_n} = t$. Let $\replaced{\bar{y}_{i_0,i_1}}{\bar{y}_{i_1,i_2}} =
	1$. For $k = \replaced{1}{2}, \ldots, \replaced{|P|-1}{n}$, let $\bar{y}_{i_k, i_{k+1}} = r_{i_{k-1}, i_k} \bar{y}_{i_{k-1}, i_k}$. Let $\bar{y}_{ij} = 0$ for all $(i, j) \in A \setminus P$, and $\bar{z}_{ij} =
	0$ for all $(i,j) \in A$. Then $(\bar{y}, \bar{z})$ is feasible to \eqref{eq:dualsubproblem}. Because the feasible
	regions of \eqref{eq:extfixed} and \eqref{eq:dualsubproblem} are nonempty \deleted{and the objectives are bounded below by
	zero}, both problems have an optimal solution.
	Let $\pi^*$ be an optimal solution to \eqref{eq:extfixed}, with corresponding optimal dual solution $(y^*, z^*)$.	

	Assume first there exists an $s$--$t$ path $P^*$ such that one of the constraints \eqref{eq:fixedpi1}--\eqref{eq:fixedpi3} is
	satisfied at equality for all $a \in P^*$. Thus,
	\begin{align*}
			\pi^*_i = \max \left\lbrace r_a \pi^*_j - (r_a - q_a)u_j^{\w} \replaced{\bar{x}_a}{x_a},\ q_a \pi^*_j \right\rbrace
	\end{align*}
	for all $a=(i, j) \in P^*$.
	\added{For a fixed path $P \in \P_{st}$, let}    
	\begin{equation*}
		\added{G^{LP}(P) \coloneqq \left\lbrace \mystrut(44,44) \right. }
		\begin{alignedat}{2}
			\added{(x, \pi) \in [0,1]^{|D|}} &\added{\times \R^{|N|} \colon} && \\
			\added{\pi_i - r_a\pi_j} & \added{\geq 0,} && \added{a=(i,j) \in P \setminus D} \\
			\added{\pi_i - r_a\pi_j} & \added{\geq -(r_a - q_a) \ub_j^\w x_a, \quad} && \added{a=(i,j) \in P \cap D} \\
			\added{\pi_i - q_a\pi_j} & \added{\geq 0,} && \added{a=(i,j) \in P \cap D} \\
			\added{\pi_t} & \added{= 1} &&
		\end{alignedat}
		\added{\left. \mystrut(44,44) \right\rbrace. }
	\end{equation*}
	\added{Let $G^{IP}(P) \coloneqq \{(x, \pi) \in G^{LP}(P) \colon x \in \{0,1\}^{|D|} \}$. Removing constraints for arcs $a \in A \setminus P^*$ from the feasible region of \eqref{eq:extfixed} does not change the problem's optimal solution. Thus,}
	\begin{align*}
		\added{\extfunc^{st}(\bar{x})} &\added{= \min_{\pi} \{\pi_s \colon (\bar{x}, \pi) \in G^{LP}(P^*) \} } \\
			&= \min_{\pi_s} \{ \pi_s \colon (\bar{x}, \pi_s) \in \proj_{(x,\pi_s)} \big( G^{LP}(P^*) \big) \}.
	\end{align*}
	\added{For any $P \in \P_{st}$, $G^{IP}(P)$ is a formulation of $\{(x, \pi_s) \colon \pi_s \geq \bar{h}_P(x) \}$. That is, $\pi_s \geq \bar{h}_P(x)$ for any $(x,\pi) \in G^{IP}(P)$, and conversely, for any $x \in \{0,1\}^{|D|}$ and $\theta$ satisfying $\theta \geq \bar{h}_P(x)$, there exists some $\pi \in \R^{|N|}$ such that $\pi_s = \theta$ and $(x,\pi) \in G^{IP}(P)$. Thus,}
	\deleted{However, the optimal objective value} \deleted{of the resulting formulation, $\pi^*_s =
	\extfunc^{st}(x)$,} \deleted{is a lower bound on $\min\{\pathpi_s \colon (x, \pathpi_s) \in \conv(\HPstar)\}$,} \deleted{because
	$\conv(\HPstar)$ is a subset of the modified} \deleted{feasible region of $\eqref{eq:extfixed}$. Therefore,}
	\begin{align*}
		&\added{\proj_{(x,\pi_s)} G^{IP}(P^*) = \HPstar} \\
		\added{\implies} &\added{\proj_{(x,\pi_s)} G^{LP}(P^*) \supseteq \conv(\HPstar)} \\
		\added{\implies} &\extfunc^{st}(\replaced{\bar{x}}{x}) \leq \min\{\pathpi_s \colon (\replaced{\bar{x}}{x}, \pathpi_s) \in \conv(\HPstar)\} \leq \pathfunc^{st}(\replaced{\bar{x}}{x}).
	\end{align*}
	
	Finally, assume there is no $s$--$t$ path $P^* \in \P_{st}$ such that one of the constraints \eqref{eq:fixedpi1}--\eqref{eq:fixedpi3} is satisfied at equality for all $a \in P^*$. 
	Consider any $s$--$t$ path $P \in \P_{st}$. By assumption, there exists an arc $(i, j) \in P$ such that constraints
	\eqref{eq:fixedpi1}--\eqref{eq:fixedpi3} are not binding. By complementary slackness, the corresponding dual optimal
	$y^*_{ij}$ and $z^*_{ij}$ are $0$\replaced{. Then}{and} the flow of \replaced{any}{that} path through the \added{dual} network \added{of \eqref{eq:dualsubproblem}} is $0$\added{, and it must be the case that $\extfunc^{st}(\replaced{\bar{x}}{x}) = 0$}. \deleted{Because there exists no} \deleted{sequence of arcs through the dual network} \deleted{of \eqref{eq:dualsubproblem} that} \deleted{has positive flow, the dual optimal value} \deleted{is bounded above by $0$.} \deleted{Thus $E^{st}(x) \leq 0 \leq \pathfunc^{st}(x)$.} \added{Because $\pathfunc^{st}(x) \geq 0$, we have $\extfunc^{st}(\replaced{\bar{x}}{x}) \leq \pathfunc^{st}(\replaced{\bar{x}}{x})$.} 
	\end{proof}
	%We now use Lemma~\ref{lemma:1} to prove Theorem~\ref{theorem:1}.
	\begin{proof}[Proof of Theorem~\ref{theorem:1}]
		From Lemma~\ref{lemma:1}, we have 
		\[ \smashoperator{\sum_{\w = (s,t) \in \W}} p_\w \bar{\pathpi}_s^{\w} \geq \enspace \smashoperator{\sum_{\w = (s,t) \in \W}} p_\w \pathfunc^{st}(\bar{x}) \geq
		\enspace \smashoperator{\sum_{\w = (s,t) \in \W}} p_\w \extfunc^{st}(\bar{x}). \]
	This implies there exists $\hat{\pi} \in \R^{\added{|N| \times |\W|}}$ %with $\hat{\pi}_s^\w \coloneqq \bar{\pathpi}_s^\w$ for all $\w = (s, t) \in \W$ 
	such that $(\bar{x},\hat{\pi})$ is feasible to \eqref{eq:ext} with objective value $\sum_{\w = (s,t) \in \W} p_\w
	\hat{\pi}_s^\w \leq \sum_{\w = (s,t) \in \W} p_\w \bar{\pathpi}_s^\w$. 
\end{proof}

Computational experiments by Ahmed and \replaced{Atamt{\"u}rk}{Atam{\"u}rk} \cite{ahmed2011} show that the inequalities \eqref{eq:ineq1} and
\eqref{eq:ineq2} may provide a poor approximation of the set $\conv(\HP)$. 
Thus, in the following sections, we describe additional inequalities that can used to approximate $\conv(\HP)$ for all
$P \in \P_{st}$. We consider three different classes of inequalities, based on the traversal probabilities of interdicted arcs.

\subsection{Inequalities for the $q > 0$ case}\label{subsec:q>0}
In this section, we assume $q_a > 0$ for all $a \in D$. For a ground set $\N$, Ahmed and \replaced{Atamt{\"u}rk}{Atam{\"u}rk} \cite{ahmed2011} study valid inequalities for the mixed-integer set
\begin{align}
	\F = \left\lbrace (x, w) \in \{0,1\}^{\added{|\N|}} \times \R \colon w \leq f(\textstyle\sum_{i \in \N}\acoef_i x_i + \bconst) \right\rbrace, \label{eq:barF}
\end{align}
where $\acoef \in \R^{\added{|\N|}}_+$, $\bconst \in \R$, and $f$ is a strictly concave, increasing, differentiable function. They derive valid inequalities for the set $\F$ and prove that they dominate the submodular inequality equivalents of \eqref{eq:ineq1} and \eqref{eq:ineq2} applied to $\F$. These improved inequalities are shown empirically to yield significantly better relaxations than \eqref{eq:ineq1} and \eqref{eq:ineq2}.

We translate the set $\HP$ to match the structure of $\F$. Let $\N \coloneqq D$, $w \coloneqq -\pathpi$, $f(u) \coloneqq -\exp(-u)$, $\bconst \coloneqq -\sum_{a \in P} \log(r_a)$. For $a \in D$, let
\begin{align*}
	\acoef_a \coloneqq \begin{cases} \log(r_a) - \log(q_a) & \textrm{if } a \in P \cap D, \\ 0 & \textrm{otherwise.} \end{cases}
\end{align*}
Because $\log(r_a) > \log(q_a)$ for all $a \in D$, we have $\acoef \in \R^{\added{|D|}}_{+}$. With these definitions, $\HP$ is expressed in the form of $\F$.

We now describe how to calculate valid inequalities for $\HP$ using the results of Ahmed and Atamt{\"u}rk
\cite{ahmed2011}. \added{Consider a set of interdicted arcs $S \subseteq D$.} Without loss of generality, let $D \setminus S \coloneqq \{1, 2, \ldots, m\}$ be indexed such that
$\acoef_1 \geq \acoef_2 \geq \ldots \geq \acoef_m$, and let $\Acoef_k = \sum_{a = 1}^k \acoef_a$ for $k \in D
\setminus S$, with $\Acoef_0 = 0$. Also define $\acoef(S) = \sum_{a \in S} \acoef_a$. The subadditive lifting inequality
\begin{align}
		\pathpi \geq h_P(S) - \smashoperator{\sum_{a \in S}} \phifunc(-\acoef_a)(1 - x_a) + \smashoperator{\sum_{a \in D \setminus S}} \rhop{a}(S) x_a \label{eq:lifted1}
\end{align}
is valid for $\HP$ for any set of interdicted arcs $S \subseteq D$, where $\phifunc$ is calculated as follows \cite{ahmed2011}. Consider the function $\zeta \colon \R_- \rightarrow \R$, calculated according to Algorithm \ref{alg:1}.
\begin{algorithm}[ht]\label{alg:1}
	$k \leftarrow 0$\;
	\While{$k < m$ {\bf and} $\Acoef_k + \deltaparam < 0$}
	{
		$k \leftarrow k + 1$\;
	}
	$\zeta(\deltaparam) \leftarrow -\exp(-\acoef(S) - \Acoef_k - \bconst - \deltaparam) + \sum_{a = 1}^k \rhop{a}(S) + \exp(-\acoef(S) - \bconst)$\;
	\caption{Computing $\zeta(\deltaparam)$ \added{and $k$ given $\deltaparam$}}
\end{algorithm}
For a provided value of $\deltaparam$, Algorithm \ref{alg:1} \added{also} returns a specific $k$. With this $k$, let $\phifunc \colon \R_- \rightarrow \R$ be defined as
\begin{align*}
	\phifunc(\deltaparam) \coloneqq \begin{cases} \zeta(\mu_k - \Acoef_{k-1}) + \rhop{k}(S) \frac{b_k(\deltaparam)}{\acoef_k} & \textrm{if }\mu_k - \Acoef_k \leq \deltaparam \leq \mu_k - \Acoef_{k-1}, \\ \zeta(\deltaparam) & \textrm{otherwise,} \end{cases}
\end{align*}
where $\mu_k = -\log(-\rhop{k}(S)/\acoef_k) - \acoef(S) - \bconst$ and $b_k(\deltaparam) = \mu_k - \Acoef_{k-1} - \deltaparam$. Inequality \eqref{eq:lifted1} dominates the general supermodular inequality \eqref{eq:ineq1} \cite{ahmed2011}.

We now construct Ahmed and Atamt{\"u}rk's inequalities that dominate the supermodular inequalities \eqref{eq:ineq2}. Let
$S = \{1, 2, \ldots, n\}$ be indexed such that $\acoef_1 \geq \acoef_2 \geq \ldots \geq \acoef_n$. Also, let
$\Acoef_k = \sum_{a = 1}^k \acoef_a$ for $k \in S$, with $\Acoef_0 = 0$. We define the function $\xi \colon \R_+
\rightarrow \R$ to be calculated according to Algorithm \ref{alg:2}.

\begin{algorithm}[ht]\label{alg:2}
	$k \leftarrow 0$\;
	\While{$k < \replaced{n}{r}$ {\bf and} $\Acoef_k < \deltaparam$}
	{
		$k \leftarrow k + 1$\;
	}
	$\xi(\deltaparam) \leftarrow -\exp(-\acoef(S) + \Acoef_k - \bconst - \deltaparam) - \sum_{a = 1}^k \rhop{a}(S \setminus \{a\}) + \exp(-\acoef(S) - \bconst)$\;
	\caption{Computing $\xi(\deltaparam)$ \added{and $k$ given $\deltaparam$}}
\end{algorithm}
\replaced{Algorithm \ref{alg:2} also calculates a $k$ from the given $\deltaparam$. With this $k$,}{With the $\deltaparam$-dependent $k$ calculated in Algorithm \ref{alg:2},} let
\begin{align*}
	\psifunc(\deltaparam) \coloneqq \begin{cases} \xi(\Acoef_k - \nu_k) + \rhop{k}(S \setminus \{k\}) \frac{b_k(\deltaparam)}{\acoef_k} & \textrm{if }\Acoef_{k-1} - \nu_k \leq \deltaparam \leq \Acoef_k - \nu_k, \\ \xi(\deltaparam) & \textrm{otherwise,} \end{cases}
\end{align*}
where $\nu_k = \acoef(S) + \log(-\rhop{k}(S \setminus \{k\}) / \acoef_k) - \bconst$ and $b_k(\deltaparam) = \Acoef_k - \nu_k - \deltaparam$ for every $k \in \{1, 2, \ldots, r\}$. For any $S \subseteq D$, the inequality
\begin{align}
	\pathpi \geq h_P(S) - \smashoperator{\sum_{a \in S}} \rhop{a}(S \setminus \{a\}) (1 - x_a) - \smashoperator{\sum_{a \in D \setminus S}} \psifunc(\acoef_a)x_a \label{eq:lifted2}
\end{align}
is valid for $\HP$. For all $a \in D \setminus P$, the coefficients in inequalities \eqref{eq:lifted1} and \eqref{eq:lifted2} are $0$.

We next discuss separation of the inequalities \eqref{eq:lifted1} and \eqref{eq:lifted2} for use in a delayed
constraint generation framework.
Given a point $(\bar{x}, \bar{\pathpi}) \in [0, 1]^{\added{|D|}} \times \R$, we want to determine if there is a set $S$ such that $(\bar{x},\bar{\pathpi})$ violates either \eqref{eq:lifted1} or \eqref{eq:lifted2}. For an integral vector $\bar{x}$, we construct the set $S = \{a \in D \colon \bar{x}_a = 1\}$. If $\bar{\pathpi} < h_P(S)$, then inequalities \eqref{eq:lifted1} and \eqref{eq:lifted2} are violated and can be added to the formulation of $\HP$ to cut off $(\bar{x}, \bar{\pathpi})$.
	
If $\bar{x}$ is not integral, we use Ahmed and Atamt{\"u}rk's scheme to heuristically construct a set $S$ using
$\bar{x}$. This set is used in inequalities \eqref{eq:lifted1} and \eqref{eq:lifted2} to cut off $(\bar{x},
\bar{\pathpi})$, if possible. We first consider constructing a set $S$ to apply inequality \eqref{eq:lifted1}. We first solve the following nonlinear program:
\begin{align}
	\max_{z \in [0,1]^{\added{|D|}}}\ \frac{-\bar{\pathpi} + 1}{ -\exp(-\textstyle\sum_{a \in D}\acoef_a z_a - \bconst) + 1} + \smashoperator{\sum\limits_{a \in D}} \frac{\rhop{a}(\emptyset)}{h_P(\emptyset) + 1} \bar{x}_a (1 - z_a). \label{eq:sep1}
\end{align}
Given a solution $\bar{z}$, we greedily round the fractional components in $\bar{z}$ that result in the least reduction
in objective value when rounded to either $0$ or $1$. Given the rounded solution, say $\bar{z}'$, we set $S = \{a \in D
\colon \bar{z}'_a = 1\}$. If $\pathpi < h_P(S) - \sum_{a \in S} \phifunc(-\acoef_a)(1 - \bar{x}_a) + \sum_{a \in D
\setminus S} \rhop{a}(S) \bar{x}_a$, valid inequality \eqref{eq:lifted1} can be added to the relaxation to cut off $(\bar{x}, \bar{\pathpi})$.

We solve a related nonlinear program to find a set $S$ to apply inequality \eqref{eq:lifted2}:
\begin{align}
	\max_{z \in [0,1]^{\added{|D|}}}\ \frac{-\bar{\pathpi} + 1}{ -\exp(-\textstyle\sum_{a \in D}\acoef_a z_a - \bconst) + 1} - \smashoperator{\sum\limits_{a \in D}} \frac{\rhop{a}(\emptyset)}{h_P(\{a\}) + 1} (1 - \bar{x}_a) z_a. \label{eq:sep2}
\end{align}
We again greedily round the solution of \eqref{eq:sep1} to obtain the characteristic vector of $S$. If $\pathpi < h_P(S) - \sum_{a \in S} \rhop{a}(S \setminus \{a\}) (1 - \bar{x}_a) - \sum_{a \in D \setminus S} \psifunc(\acoef_a) \bar{x}_a$, inequality \eqref{eq:lifted2} cuts off $(\bar{x}, \bar{\pathpi})$.

Yu and Ahmed \cite{yu2017} consider deriving stronger valid inequalities by imposing a knapsack constraint on the set $\F$ given in equation \eqref{eq:barF}. In particular,
\begin{align*}
	\F = \left\lbrace (x, w) \in \{0,1\}^{\added{|\N|}} \times \R \colon w \leq f(\textstyle\sum_{i \in \N}\acoef_i x_i + \bconst),\ \sum_{i \in \N} x_i \leq k \right\rbrace,
\end{align*}
where $k > 0$. Although our test instances include this structure, we do not explore applying their
results here. In particular, when interdicting arcs along an {\it individual} path, the knapsack constraint is only 
relevant if the number of arcs in the path under consideration is larger than the defender's budget. 
%be relevant, as the number of arcs in an individual path is typically smaller than the defender's budget. 
%Even if every possible arc was interdicted along a path $P$, the budget constraint would not have affected the feasible region of $\HP$.

\subsection{Inequalities for $q = 0$ case}\label{subsec:q=0}
When $q_a = 0$ for all $a \in D$, $\log(q_a)$ is no longer defined, and the results of Section \ref{subsec:q>0} do not apply. We consider a different class of inequalities for this special case of $q$.

If all interdicted arc probabilities are $0$, $\bar{h}_P(x)$ simply reduces to
\begin{align*}
	\bar{h}_P(x) = \left[\, \prod_{a \in P} r_a \right] \left[ \enspace\ \smashoperator{\prod_{a \in P \cap D}}\, (1 - x_a) \right].
\end{align*}
If any arc $a \in P \cap D$ is interdicted, $\bar{h}_P(x)$ equals $0$. By setting $\hat{\pathpi} \coloneqq \pathpi / \prod_{a \in P} r_a$, our relevant mixed-integer linear set is
\begin{align*}
	\HP &= \left\lbrace (x,\hat{\pathpi}) \in \{0,1\}^{\added{|D|}} \times \R \colon \hat{\pathpi} \geq \textstyle\prod_{a \in P \cap D} (1 - x_a) \right\rbrace.
\end{align*}
We are again interested in valid inequalities for $\HP$.  The inequalities
\begin{align}
	\hat{\pathpi} &\geq 1 - \smashoperator{\sum_{a \in P \cap D}} x_a \label{eq:french1}
\end{align}
and $\hat{\pathpi} \geq 0$ are valid for $\HP$ \cite{fortet1960} and define its convex hull \cite{alkhayyal1983}.

\subsection{Inequalities for the mixed case}\label{subsec:qmixed}
We now consider the case where $q_a > 0$ for some, but not all, $a \in D$. %We derive inequalities by combining the results of Sections \ref{subsec:q>0} and \ref{subsec:q=0}.
Let $\pplus \coloneqq \{a \in P \cap D \colon q_a > 0\}$ and $\pzero \coloneqq \{a \in P \cap D \colon q_a = 0\}$.
\replaced{For any $U \subseteq A$, let $r(U) \coloneqq \prod_{a \in U} r_a$.}{For $S \subseteq A$ let $r(S) \coloneqq \prod_{a \in S} r_a$.} We write $\bar{h}_P(x)$ as
\begin{align}
	\bar{h}_P(x) &= r(P \setminus D) \bar{h}_{\pplus}(x) \bar{h}_{\pzero}(x). \label{hprewrite}
\end{align}
We introduce two new variables, $\pathpiplus \in \left[0, r(\pplus) \right]$ and $\pathpizero \in [0, r(\pzero)]$ to
represent $\bar{h}_{\pplus}(x)$ and $\bar{h}_{\pzero}(x)$, respectively, and arrive at the formulation:
\begin{align}
	\pathpi &\geq r(P \setminus D) \pathpiplus \pathpizero \label{eq:theta1} \\
	\pathpiplus &\geq \bar{h}_{\pplus}(x) \label{eq:thetaplus1} \\
	\pathpizero &\geq \bar{h}_{\pzero}(x). \label{eq:thetazero1}
\end{align}
Our approach is to use results from Sections \ref{subsec:q>0} and \ref{subsec:q=0} to derive valid inequalities for
\eqref{eq:thetaplus1} and \eqref{eq:thetazero1}, respectively, and 
relax the nonconvex constraint \eqref{eq:theta1} using the  McCormick inequalities \cite{mccormick1976}, which in this
case reduce to
\begin{align}
	\pathpi & \geq r(P \setminus D) \left[ r(\pzero)\pathpiplus - r(\pplus)(r(\pzero) - \pathpizero) \right] \label{eq:q0ineq1}
\end{align}
and $\pathpi \geq 0$. 

Let $\liftcoef^S \in \R^{\added{|\pplus|}}$ and constant $\liftconst^S \in \R$ be coefficients from one inequality of the form
\eqref{eq:lifted1} (with $P_+$ taking the place of $P$) for some $S \subseteq \replaced{D}{P_+}$. An inequality of the form \eqref{eq:lifted2} may also be used. We relax \eqref{eq:thetaplus1} to the constraint
\begin{align}
	\pathpiplus & \geq \smashoperator{\sum_{a \in \pplus}} \liftcoef^S_a x_a + \liftconst^S. \label{eq:fme1}
\end{align}
Inequality \eqref{eq:thetazero1} is relaxed as $\pathpizero \geq 0$, and, applying \eqref{eq:french1}, 
\begin{align}
	\pathpizero & \geq r(\pzero) \left( 1 - \smashoperator{\sum_{a \in \pzero}} x_a \right) .  \label{eq:fme2}
\end{align}
Finally, we project variables $\pathpiplus$ and $\pathpizero$ out of inequality \eqref{eq:q0ineq1} using Fourier-Motzkin elimination with constraints \eqref{eq:fme1}--\eqref{eq:fme2} and $\pathpizero \geq 0$. This results in the two inequalities
\begin{align}
	\pathpi & \geq r(P \setminus D) r(\pzero) \left[\ \smashoperator{\sum_{a \in \pplus}} \liftcoef^S_a x_a +
	\liftconst^S - r(\pplus)\smashoperator{\sum_{a \in \pzero}} x_a \right] \label{eq:q01} \\
	\pathpi & \geq r(P \setminus D) r(\pzero) \left[\ \smashoperator{\sum_{a \in \pplus}} \liftcoef^S_a x_a +
	\liftconst^S - r(\pplus) \right]. \label{eq:q02}
\end{align}
Inequality \eqref{eq:q02} is dominated by $\pathpi \geq 0$, because $\sum_{a \in \pplus} \liftcoef^S_a x_a +
\liftconst^S
\leq \pathpiplus \leq r(P_+)$. Therefore, inequality \eqref{eq:q01} is the only inequality we obtain from this
procedure for the given inequality \eqref{eq:lifted1} defined by $S$. 

We next argue that for any integer solution $(\bar{x},\bar{\pathpi}) \in \{0, 1\}^{\added{|D|}} \times \R_+$, if it is not feasible
(i.e., $(\bar{x},\bar{\pathpi}) \notin \HP$),
then we can efficiently find an inequality of the form \eqref{eq:q01} that this solution violates. Indeed, 
first observe that if $\bar{x}_a = 1$ for any $a \in P_0$ then $\bar{h}_P(\bar{x}) = 0 \leq \bar{\pathpi}$, by
assumption, and hence the solution is feasible. So, we may assume $\bar{x}_a = 0$ for all $a \in P_0$. We set $S =
\{ a \in P_+ : \bar{x}_a = 1\}$, which yields 
$\sum_{a \in \pplus} \liftcoef^S_a \bar{x}_a + \liftconst^S = h_{\pplus}(S) = \bar{h}_{\pplus}(\bar{x})$. 
Inequality \eqref{eq:q01} thus yields
\[ \pathpi \geq   r(P \setminus D) r(\pzero)  \left[ \bar{h}_{\pplus}(\bar{x}) - 0 \right] = \bar{h}_P(\bar{x})  \]
by \eqref{hprewrite} since $\bar{h}_{\pzero}(\bar{x}) = r(\pzero)$.

%, or $\pi \geq 0$, are
%sufficient to e
%We next show that inequalities of the form \eqref{eq:q01},
%are sufficient to define a formulation.
%\begin{proposition}\label{prop:3}
%$\HP = \{({x}, \pathpi) \in \{0, 1\}^{\added{|D|}} \times \R : \eqref{eq:q01} \quad \forall S \subseteq P_+ \}$.
%\end{proposition}
%\begin{proof}
%As we have already established validity of \eqref{eq:q01} for any $S \subseteq P_+$, it  is sufficient to show that that
%if $(\bar{x}, \bar{\pathpi}) \in \{0, 1\}^{\added{|D|}} \times \R$ satisfies \eqref{eq:q01} for all $S \subseteq P_+$, then
%$\bar{\pathpi} \geq \bar{h}_P(\bar{x})$. 
%	If $\bar{x}_a = 1$ for some $a \in \pzero$, then $\bar{h}_P(\bar{x}) = 0$. Inequality $\pathpi \geq 0$ dominates
%	inequality \eqref{eq:q01} because $\sum_{a \in \pplus} \liftcoef^S_a \bar{x}_a + \liftconst^S \leq r(\pplus)$. Thus, $\bar{\pi} \geq \bar{h}_P(\bar{x}) = 0$.
%	If $\bar{x}_a = 0$ for all $a \in \pzero$, inequality \eqref{eq:q01} dominates $\pathpi \geq 0$ by reducing to
%	\begin{align*}
%		\pathpi & \geq r(P \setminus D) r(\pzero) \bar{h}_{\pplus}(\bar{x}) \\
%		&= \bar{h}_{P}(\bar{x})
%	\end{align*}
%	according to the coefficients provided in Ahmed and Atamt{\"u}rk's \cite{ahmed2011} supermodular inequalities \eqref{eq:lifted1}. 
%\end{proof}

\subsection{Path-based branch-and-cut algorithm}\label{subsec:bncsummary}

We next describe how the inequalities derived in Sections \ref{subsec:q>0}--\ref{subsec:qmixed}  can be used within a
branch-and-cut algorithm to solve the path-based formulation \eqref{eq:path3}. 
Similar to the use of Benders decomposition in a branch-and-cut algorithm described in Section \ref{subsec:benders}, the algorithm is
based on solving a master problem via branch-and-cut in which the constraints \eqref{eq:path3path} are relaxed and approximated with
cuts. 

%Correctness of the algorithm is based on showing that, when a solution $(\bar{x},
%\bar{\pathpi})$ is obtained at a node relaxation, then for each $\w = (s, t) \in \W$ and $P \in \P_{st}$, either the
%solution is feasible to \eqref{eq:path3path} or we can find a cut that ensures $\pathpi_{\w} \geq \bar{h}_P(\bar{x})$,
%so that if $\bar{x}$ is obtained as part of a relaxation solution again, the $\pathpi_{\w}$ variable must satisfy
%\eqref{eq:path3path}. 

Let $(\bar{x}, \bar{\pathpi})$ be a solution obtained in the branch-and-cut algorithm with integral $\bar{x}$.
Since \eqref{eq:path3path} are relaxed, we must
check if this solution satisfies these constraints, and if not, finding a cut that this solution violates.
We assign arc traversal probabilities $\cc_a$ for $a \in D$ according to the formula
\begin{align}
	\cc_a &= r_a^{1 - \bar{x}_a} q_a^{\bar{x}_a}. \label{reldef1}
\end{align}
For each $\w = (s,t) \in \W$, we find a maximum-reliability path $\bar{P} \in \P_{st}$. If $\bar{\pathpi}_s^t \geq
\bar{h}_{\bar{P}}(\bar{x})$ then this solution is feasible to \eqref{eq:path3path} for this $\w$, since, by
construction, $\bar{h}_{\bar{P}}(\bar{x}) = \max_{P \in \P_{st}} \bar{h}_P(\bar{x})$. Otherwise, depending on if $q_a =
0$ for any or all $a \in \bar{P}$, we derive a cut from one of Sections \ref{subsec:q>0}--\ref{subsec:qmixed}, using
this path $\bar{P}$ and scenario $\w$. By construction, for the solution $\bar{x}$, this cut enforces that $\pi_s^t
\geq \bar{h}_{\bar{P}}(\bar{x})$, and hence (i) cuts off the current infeasible solution $(\bar{x}, \bar{\pathpi})$, and
(ii) implies that any solution $(x,\pathpi)$ of the updated master problem with $x =\bar{x}$ will satisfy
\eqref{eq:path3path} for this $\w$. This ensures
that the branch-and-cut algorithm is finite (since at most finitely many such cuts are needed at finitely many integer solutions) and
correct (since any infeasible solution obtained in the algorithm will be cut off).

One may also attempt to generate cuts at solutions $(\bar{x}, \bar{\pathpi})$ in which $\bar{x}$ is not necessarily
integer, in order to improve the LP relaxation.
To find a path for a scenario $\w = (s,t)$ in this case, we again use formula \eqref{reldef1} to define arc
reliabilities, and then find the most reliable $s$-$t$ path. With this path, we again apply the methods in Sections
\ref{subsec:q>0}--\ref{subsec:qmixed}  to attempt to derive a violated cut. When $\bar{x}$ is not integral, this
approach is not guaranteed to find a violated cut, even if one exists. Thus, to increase the chances a cut is found, we
may also consider identifying another path by assigning arc costs as follows:
\begin{align*}
	\cc_a &= (1 - \bar{x}_a)r_a +  \bar{x}_a q_a,
\end{align*}
and then finding a maximum-reliability path using these arc reliabilities.
Our preliminary tests did not reveal any obvious benefit of using one particular arc-reliability calculation method. 

%When attempting to generate cuts for a set of scenarios, we run a
%maximum-reliability path algorithm for each destination $t \in T$ to obtain a tree of maximum-reliability paths from all
%nodes to $t$. For all of the scenarios ending at $t$, that scenario's candidate path is the maximum-reliability path..
%% jim comment: Not sure we need this detail. If we include it, perhaps in the computational results section

In our implementation, before starting the branch-and-cut algorithm we solve a relaxation of the master problem \eqref{eq:path3} in which constraints \eqref{eq:path3path}
are dropped, and the integrality constraints are relaxed. After solving this LP relaxation, 
we attempt to identify violated cuts for the relaxation solution, and if found, we add them to the master relaxation and
repeat this process until no more violated cuts are found.
We then begin the branch-and-cut process with all cuts found when solving the LP relaxation included in the mixed-integer programming
formulation. This allows the solver to use these cuts to generate new cuts in the master problem relaxation. At any point in the
branch-and-bound process where a solution $(\bar{x},\bar{\pathpi})$ with $\bar{x}$ integral is obtained, we identify if
there is any violated cut (as discussed above) and if so add it to the formulation via the lazy constraint callback function. 

\section{Computational experiments}\label{sec:computational}
We test the different solution methods on SNIP instances from Pan and
Morton \cite{pan2008}, which consist of a network of $783$ nodes and $2586$ arcs, $320$ of which
can be interdicted by the defender. Each instance considers the same $456$ scenarios. 
%The $r_{a}$ values are independently sampled from a $U(0.5,1)$ distribution. 
We consider three cases for the $q$ vector outlined by Pan and
Morton \cite{pan2008}. In particular, we consider instances with $q \coloneqq 0.5r$, $q \coloneqq 0.1r$, and $q
\coloneqq 0$. Pan and Morton also considered $q_a$ values independently sampled from a $U(0,0.5)$ distribution. 
Since the DEF for most of these instances could be solved within $30$ seconds with little or no branching, we exclude
these in our experiments. The cost of interdiction is $\c_a = 1$ for all $a \in D$. Seven different budget levels were
tested for each network and $q$ level, and there are five test instances for each budget level and $q$ level.

\added{All of these instances satisfy $q > 0$ or $q = 0$. Although in Section~\ref{subsec:qmixed} we derive valid inequalities for instances with a mix of positive and zero $q$ values, we do not test instances with this structure. We speculate that the path-based decomposition may perform relatively worse on instances with this structure, as the valid inequalities from Section~\ref{subsec:qmixed} include an additional relaxation of the nonconvex constraint \eqref{eq:theta1}.}

We consider the following algorithms in our tests.
% Use raised small bullet as item label
\begin{itemize}[label=\raisebox{0.5ex}{\tiny$\bullet$}]
	\itemsep0em
	\item \itab{DEF:}\, Solve DEF \eqref{eq:ext} with a MIP solver  
	\item \itab{C-DEF:}\, Solve compact DEF \eqref{eq:com} with a MIP solver
	\item \itab{BEN:}\, Benders branch-and-cut algorithm on DEF \eqref{eq:ext}; refer to \eqref{eq:benders}
	\item \itab{PATH:}\, Branch-and-cut algorithm on path-based decomposition \eqref{eq:path}
\end{itemize}
DEF and C-DEF are solved using the commercial MIP solver IBM ILOG CPLEX 12.6.3. BEN is the
Benders branch-and-cut algorithm as implemented in Bodur~\etal \cite[Section~4]{bodur2016}. The branch-and-cut algorithms, BEN and PATH, were
implemented within the CPLEX solver using lazy constraint callbacks. 
Ipopt 3.12.1 was used to solve nonlinear models \eqref{eq:sep1} and \eqref{eq:sep2} in PATH. 
The computational tests were run on a 12-core machine with two 2.66GHz Intel Xeon X5650 processors and
128GB RAM, and \added{a} one-hour time limit was imposed. We allowed four threads for DEF, while all other algorithms were limited
to one thread. We used the CPLEX default relative gap tolerance of $10^{-4}$ to terminate the branch-and-bound process. The algorithms were written in Julia 0.4.5 using the Julia for Mathematical Optimization framework \cite{lubin2015}. 

In our implementation of the BEN and PATH algorithms, when generating cuts at the root LP relaxation, we used the following 
procedure to focus the computational effort on scenarios that yield cuts. After each iteration, we make a list of
scenarios for which violated cuts were found in that iteration. Then, in the next iteration, we first only attempt to identify cuts for scenarios in this list. If
successful, the list is further reduced to the subset of scenarios that yielded a violated cut. If we
fail to find any violated cuts in the current list, we re-initialize the list with all scenarios and attempt to identify violated cuts for each scenario. 

\begin{table}
	\centering
	\begin{tabular}{llrrrr}
		\toprule
		 & & \multicolumn{4}{c}{Average solve time in seconds (\# unsolved)} \\
		 \addlinespace[0.2em] \cline{3-6} \addlinespace[0.4em]
		$q$ & $b$ & DEF & C-DEF & BEN & PATH \\ \midrule
		$0.5r$ & 30 & (2)1652.1 & 23.9 & 298.5 & 54.1 \\
		& 40 & (2)2678.1 & 35.6 & 337.7 & 32.9 \\
		& 50 & (2)2242.4 & 31.7 & 355.8 & 42.8 \\
		& 60 & (2)1891.7 & 34.0 & 454.1 & 70.4 \\
		& 70 & (1)1730.0 & 27.4 & 487.4 & 63.9 \\
		& 80 & 826.0 & 14.4 & 386.7 & 27.0 \\
		& 90 & 460.3 & 6.9 & 385.0 & 26.8 \\ \midrule
		$0.1r$ & 30 & (5) & 117.5 & 438.7 & 55.3 \\
		& 40 & (5) & 466.0 & 812.3 & 301.2 \\
		& 50 & (5) & 327.9 & 787.2 & 265.3 \\
		& 60 & (5) & 638.6 & 793.7 & 194.1 \\
		& 70 & (5) & 851.6 & 983.0 & 335.0 \\
		& 80 & (5) & (1)1257.4 & (1)1106.3 & (1)891.1 \\
		& 90 & (5) & (2)2329.4 & (3)1269.4 & (2)1497.5 \\ \midrule
		$0$ & 30 & (5) & 96.1 & 487.0 & 61.7 \\
		& 40 & (5) & 196.4 & 556.8 & 177.0 \\
		& 50 & (5) & 332.3 & 663.6 & 250.8 \\
		& 60 & (5) & 369.0 & 1141.8 & 374.8 \\
		& 70 & (5) & 341.0 & 739.5 & 354.4 \\
		& 80 & (5) & 262.9 & 535.1 & 151.5 \\
		& 90 & (5) & 540.0 & 765.7 & 635.2 \\	
		\bottomrule
	\end{tabular}
	\caption{Computational results}
	\label{tab:results}
\end{table}
\begin{table}
	\centering
	\small
	\begin{tabular}{ccrrcrrr}
		\toprule
		& & \multicolumn{2}{c}{LP relaxation} & & \multicolumn{3}{c}{LP relaxation after CPLEX cuts} \\
		\addlinespace[0.2em] \cline{3-4} \cline{6-8} \addlinespace[0.4em]
		$q$ & $b$ & BEN & PATH & & C-DEF & BEN & PATH \\ \midrule
		$0.5r$ & 30 & 10.64\% & 10.64\% & & 2.38\% & 6.11\% & 6.03\% \\
		& 40 & 11.34\% & 11.35\% & & 2.80\% & 6.28\% & 6.20\% \\
		& 50 & 11.22\% & 11.23\% & & 2.40\% & 5.60\% & 5.62\% \\
		& 60 & 10.54\% & 10.55\% & & 2.07\% & 4.76\% & 4.80\% \\
		& 70 & 8.88\% & 8.89\% & & 1.83\% & 4.14\% & 4.34\% \\
		& 80 & 6.25\% & 6.25\% & & 0.97\% & 3.13\% & 3.19\% \\
		& 90 & 3.92\% & 3.91\% & & 0.43\% & 1.92\% & 2.02\% \\ \midrule
		$0.1r$ & 30 & 22.47\% & 22.49\% & & 6.72\% & 6.11\% & 6.67\% \\
		& 40 & 26.22\% & 26.18\% & & 9.35\% & 8.73\% & 8.21\% \\
		& 50 & 27.54\% & 27.48\% & & 9.38\% & 9.01\% & 9.28\% \\
		& 60 & 28.16\% & 28.08\% & & 9.93\% & 8.75\% & 9.35\% \\
		& 70 & 28.92\% & 28.83\% & & 9.67\% & 9.30\% & 9.70\% \\
		& 80 & 30.88\% & 30.90\% & & 10.85\% & 11.57\% & 11.96\% \\
		& 90 & 33.07\% & 32.98\% & & 12.45\% & 15.18\% & 15.46\% \\ \midrule
		$0$ & 30 & 25.15\% & 25.30\% & & 7.07\% & 6.72\% & 6.50\% \\
		& 40 & 28.45\% & 28.55\% & & 9.62\% & 8.07\% & 9.25\% \\
		& 50 & 30.54\% & 30.78\% & & 11.02\% & 10.03\% & 11.51\% \\
		& 60 & 32.07\% & 32.45\% & & 12.37\% & 12.48\% & 14.20\% \\
		& 70 & 32.60\% & 33.30\% & & 11.21\% & 11.82\% & 14.92\% \\
		& 80 & 33.28\% & 33.99\% & & 10.43\% & 11.44\% & 15.87\% \\
		& 90 & 36.17\% & 36.97\% & & 12.89\% & 15.45\% & 20.94\% \\
		\bottomrule
	\end{tabular}
	\caption{Average root gaps before and after CPLEX cuts. The LP relaxation of BEN is the same as DEF and C-DEF.}
	\label{tab:lprelax}
\end{table}
\begin{table}
	\setlength{\tabcolsep}{4.5pt}
	\centering
	\small
	\begin{tabular}{ccrrrrcrr}
		\toprule
		& & \multicolumn{4}{c}{Branch-and-bound nodes} & & \multicolumn{2}{c}{Cut gen. time (sec.)} \\
		\addlinespace[0.2em] \cline{3-6} \cline{8-9} \addlinespace[0.4em]
		$q$ & $b$ & DEF & C-DEF & BEN & PATH & & BEN & PATH \\ \midrule
		0.5r & 30 & (2)1080.7 & 1406.6 & 39800.8 & 51512.2 & & 266.5 & 18.9 \\
		& 40 & (2)1982.7 & 2470.2 & 21909.8 & 7697.8 & & 313.6 & 21.2 \\
		& 50 & (2)1996.3 & 2516.4 & 24112 & 17161 & & 324.8 & 21.1 \\
		& 60 & (2)1723.7 & 3270.6 & 35094.2 & 35860.2 & & 404.5 & 22 \\
		& 70 & (1)1464 & 2242.4 & 24387.2 & 34456.2 & & 456.9 & 22.1 \\
		& 80 & 727 & 1235.4 & 5989.4 & 3509.4 & & 378.7 & 21.2 \\
		& 90 & 556 & 372 & 4930.2 & 7512.2 & & 379.1 & 17.3 \\ \midrule
		0.1r & 30 & (5) & 3870.8 & 6576.2 & 10156.8 & & 417.2 & 26.2 \\
		& 40 & (5) & 17798.2 & 87973.4 & 108169.6 & & 583.9 & 29.9 \\
		& 50 & (5) & 8950.4 & 89062.4 & 70182 & & 507.6 & 29.7 \\
		& 60 & (5) & 22457.8 & 95993.8 & 53885.8 & & 528.7 & 30.9 \\
		& 70 & (5) & 28358.4 & 176635.4 & 83490.6 & & 448.8 & 32.4 \\
		& 80 & (5) & (1)45951.8 & (1)163500.3 & (1)242734 & & (1)497.9 & (1)33.9 \\
		& 90 & (5) & (2)101616.7 & (3)170902.5 & (2)327946.3 & & (3)474.8 & (2)37.5 \\ \midrule
		0 & 30 & (5) & 2205 & 5613.4 & 9114.8 & & 462 & 27.2 \\
		& 40 & (5) & 3821.6 & 13803.4 & 29569.6 & & 488.2 & 28.1 \\
		& 50 & (5) & 6037.8 & 30095.8 & 47806.4 & & 511.4 & 28.2 \\
		& 60 & (5) & 5144.6 & 127214 & 65936 & & 546.7 & 27.9 \\
		& 70 & (5) & 5379 & 45454 & 50358.6 & & 477.9 & 28 \\
		& 80 & (5) & 2667 & 18581.6 & 11890.6 & & 406.7 & 28.1 \\
		& 90 & (5) & 7289.4 & 52156.2 & 64416.2 & & 424.5 & 29.5 \\
		\bottomrule
	\end{tabular}
	\caption{Average branch-and-bound nodes and cut generation time}
	\label{tab:bnb}
\end{table}

Table~\ref{tab:results} reports the average computational time, in seconds, over the five instances for each combination
of the vector $q$ and budget level $b$. The average times include only instances solved within the time limit -- the
number of unsolved instances are enclosed in parentheses. We find that PATH and C-DEF are consistently faster than both DEF and BEN. 
PATH and C-DEF have comparable performance, with C-DEF being somewhat faster on the $q=0.5r$ instances
(the easiest set) and PATH being somewhat faster on the $q=0.1r$ instances (the hardest set). 

Table~\ref{tab:lprelax} reports the root gap after adding LP cuts for the decomposition algorithms BEN and PATH,
relative to the optimal solution value, before and after the addition of cuts from CPLEX. The root gap is calculated as $(z^* - z^{LP}) / z^*$, where $z^*$ is the
instance's optimal value and $z^{LP}$ is the objective value obtained after running the initial cutting plane loop on
the relaxed master problem. The reported value is the arithmetic mean over the five instances for each combination of
interdicted traversal probabilities $q$ and budget level $b$. DEF, C-DEF, and BEN all have the same LP relaxation value,
so we show this gap only for BEN. The columns under the heading ``LP relaxation after CPLEX cuts'' refer to the gap that
is obtained at the root node, after 
CPLEX finishes its process of adding general-purpose cuts. We exclude DEF from these results since in many instances the
root node has not completed processing in the time limit.
These results show that the initial LP relaxation value obtained using the cuts in the PATH method is very similar to
that obtained using the BEN (or the DEF or C-DEF formulations).
We also find that the general purpose cuts in CPLEX significantly improve the relaxation value for all formulations,
with the greatest improvement coming in the C-DEF formulation. The gaps obtained after the addition of CPLEX cuts are
quite similar between PATH and BEN, with the exception of the $q=0$ instances, where the CPLEX cuts seem to be more
effective for the BEN formulation.

Table~\ref{tab:bnb} displays the average number of branch-and-bound nodes of each algorithm and the time spent generating cuts in BEN and PATH. This includes time spent generating cuts during the initial cutting plane loop as well as within the branch-and-cut process. The mean is over instances that solved within the time limit.
Consistent with the results on root relaxation gaps, we find that C-DEF requires the fewest number of branch-and-bound
nodes, and that PATH and BEN methods require a similar number of branch-and-bound nodes.
On the other hand, significantly less time is spent generating cuts in the PATH method than in the BEN method,
which explains the better performance of PATH.
In particular, generating cuts in PATH requires solving a maximum-reliability path problem for each destination node, as
opposed to the Benders approach which requires solving a linear program for each scenario. 
%Full versions of all tables can be found in Appendix \ref{app:results}. The full tables detail results for each individual instance, rather than averages for each instance class.

%\section{Conclusion}
%In this paper, we identified redundancies in the existing SNIP extensive formulation and derived an equivalent and significantly more compact extensive formulation. This reformulation reduces the number of necessary constraints by an order of magnitude. Directly solving this compact extensive formulation is much more effective than not only solving the previously studied extensive formulation, but also a Benders branch-and-cut algorithm.

%We also proposed a path-based formulation of SNIP that can be solved via delayed constraint generation. The path-based decomposition proved effective, outperforming directly solving the extensive formulation and, in most cases, the general Benders branch-and-cut algorithm of Bodur~\etal \cite{bodur2016}. It scaled to larger instances better than the Benders decomposition. This approach generates cuts by solving a maximum-reliability problem for each destination node and applying an efficient algorithm to calculate coefficients for valid inequalities.

\section{Conclusion}

We have proposed two new methods for solving maximum-reliability SNIP. The first
method is to solve a more compact DEF, and the second is based on a reformulation derived from considering the
reliability for each path separately. Both of these methods outperform state-of-the-art methods on a class of SNIP
instances from the literature. An interesting direction for future research is to investigate whether ideas similar to the path-based
formulation might be useful for solving different variants of SNIP, such as the
maximum-flow network interdiction problem.

\bibliographystyle{plain}
\bibliography{refs.bib}

\begin{thebibliography}{10}

\bibitem{ahmed2011}
Shabbir Ahmed and Alper Atamt{\"u}rk.
\newblock Maximizing a class of submodular utility functions.
\newblock {\em Math. Program.}, 128(1-2):149--169, 2011.

\bibitem{ahuja1993}
Ravindra~K. Ahuja, Thomas~L. Magnanti, and James~B. Orlin.
\newblock {\em Network flows: theory, algorithms, and applications}.
\newblock Prentice Hall, Upper Saddle River, NJ, USA, 1993.

\bibitem{alkhayyal1983}
Faiz~A. Al-Khayyal and James~E. Falk.
\newblock Jointly constrained biconvex programming.
\newblock {\em Math. of Oper. Res.}, 8(2):273--286, 1983.

\bibitem{bodur2016}
Merve Bodur, Sanjeeb Dash, Oktay G{\"u}nl{\"u}k, and James Luedtke.
\newblock Strengthened {B}enders cuts for stochastic integer programs with
  continuous recourse.
\newblock {\em INFORMS J. on Comput.}, 29(1):77--91, 2016.

\bibitem{brown2009-2}
Gerald Brown, Matthew Carlyle, Javier Salmer{\'o}n, and Kevin Wood.
\newblock Defending critical infrastructure.
\newblock {\em Interfaces}, 36(6):530--544, 2006.

\bibitem{brown2009}
Gerald~G. Brown, W.~Matthew Carlyle, Robert~C. Harney, Eric~M. Skroch, and
  R.~Kevin Wood.
\newblock Interdicting a nuclear-weapons project.
\newblock {\em Oper. Res.}, 57(4):866--877, 2009.

\bibitem{cormican1998}
Kelly~J. Cormican, David~P. Morton, and R.~Kevin Wood.
\newblock Stochastic network interdiction.
\newblock {\em Oper. Res.}, 46(2):184--197, 1998.

\bibitem{dimitrov2011}
Nedialko~B Dimitrov, Dennis~P Michalopoulos, David~P Morton, Michael~V Nehme,
  Feng Pan, Elmira Popova, Erich~A Schneider, and Gregory~G Thoreson.
\newblock Network deployment of radiation detectors with physics-based
  detection probability calculations.
\newblock {\em Annals of Operations Research}, 187(1):207--228, 2011.

\bibitem{fortet1960}
R.~Fortet.
\newblock Applications de l'alg\`{e}bre de {B}oole en recherche
  op\'{e}rationelle.
\newblock {\em Rev. Fr. d'Inform. et de Rech. Op\'{e}r.}, 4(14):17--25, 1960.

\bibitem{fulkerson1977}
Delbert~R. Fulkerson and Gary~C. Harding.
\newblock Maximizing the minimum source-sink path subject to a budget
  constraint.
\newblock {\em Math. Program.}, 13(1):116--118, 1977.

\bibitem{golden1978}
Bruce Golden.
\newblock A problem in network interdiction.
\newblock {\em Nav. Res. Logist. Q.}, 25(4):711--713, 1978.

\bibitem{hemmecke2003}
Raymond Hemmecke, R{\"u}diger Schultz, and David~L. Woodruff.
\newblock Interdicting stochastic networks with binary interdiction effort.
\newblock In David~L. Woodruff, editor, {\em Netw. Interdiction and Stoch.
  Integer Program.}, pages 69--84. Springer US, 2003.

\bibitem{jeff2008}
Udom Janjarassuk and Jeff Linderoth.
\newblock Reformulation and sampling to solve a stochastic network interdiction
  problem.
\newblock {\em Networks}, 52(3):120--132, 2008.

\bibitem{lubin2015}
Miles Lubin and Iain Dunning.
\newblock Computing in operations research using {J}ulia.
\newblock {\em INFORMS J. on Comput.}, 27(2):238--248, 2015.

\bibitem{mccormick1976}
Garth~P. McCormick.
\newblock Computability of global solutions to factorable nonconvex programs:
  {P}art {I}--{C}onvex underestimating problems.
\newblock {\em Math. Program.}, 10(1):147--175, 1976.

\bibitem{morton2010}
David~P. Morton.
\newblock Stochastic network interdiction.
\newblock In James~J. Cochran, editor, {\em Wiley Encyclopedia of Operations
  Research and Management Science}. John Wiley \& Sons, Inc., 2011.

\bibitem{morton2007}
David~P. Morton, Feng Pan, and Kevin~J. Saeger.
\newblock Models for nuclear smuggling interdiction.
\newblock {\em IIE Trans.}, 39(1):3--14, 2007.

\bibitem{nemhauser1988}
George~L. Nemhauser and Laurence~A. Wolsey.
\newblock {\em Integer and combinatorial optimization}.
\newblock Wiley-Interscience, New York, NY, USA, 1988.

\bibitem{nemhauser1978}
George~L. Nemhauser, Laurence~A. Wolsey, and Marshall~L. Fisher.
\newblock An analysis of approximations for maximizing submodular set functions
  -- {I}.
\newblock {\em Math. Program.}, 14(1):265--294, 1978.

\bibitem{pan2003}
Feng Pan, William~S. Charlton, and David~P. Morton.
\newblock A stochastic program for interdicting smuggled nuclear material.
\newblock In David~L. Woodruff, editor, {\em Netw. Interdiction and Stoch.
  Integer Program.}, pages 1--19. Springer US, 2003.

\bibitem{pan2008}
Feng Pan and David~P. Morton.
\newblock Minimizing a stochastic maximum-reliability path.
\newblock {\em Networks}, 52(3):111--119, 2008.

\bibitem{schrijver2003}
Alexander Schrivjer.
\newblock {\em Combinatorial optimization: polyhedra and efficiency}.
\newblock Springer, 2003.

\bibitem{wollmer1964}
Richard Wollmer.
\newblock Removing arcs from a network.
\newblock {\em Oper. Res.}, 12(6):934--940, 1964.

\bibitem{wood1993}
R.~Kevin Wood.
\newblock Deterministic network interdiction.
\newblock {\em Math. and Comput. Model.}, 17(2):1--18, 1993.

\bibitem{yu2017}
Jiajin Yu and Shabbir Ahmed.
\newblock Maximizing a class of submodular utility functions with constraints.
\newblock {\em Math. Program.}, 162(1-2):145--164, 2017.

\end{thebibliography}

\end{document}